\newcommand{\bbN}{{\mathbb{N}}}
\newcommand{\bbR}{{\mathbb{R}}}
\newcommand{\bbC}{{\mathbb{C}}}
\newcommand{\cB}{{\mathcal B}}
\newcommand{\cD}{{\mathcal D}}
\newcommand{\cH}{{\mathcal H}}
\newcommand{\cS}{{\mathcal S}}
\newcommand{\cV}{{\mathcal V}}
\newcommand{\cX}{{\mathcal X}}
\newcommand{\cW}{{\mathcal W}}
\newcommand{\dott}{\,\cdot\,}
\newcommand{\no}{\notag}
\newcommand{\lb}{\label}
\newcommand{\f}{\frac}
\newcommand{\ol}{\overline}
\newcommand{\wti}{\widetilde}
\newcommand{\Oh}{O}
\newcommand{\dom}{\text{\rm{dom}}}
\newcommand{\supp}{\text{\rm{supp}}}
\newcommand{\bi}{\bibitem}
\newcommand{\hatt}{\widehat}
\newcommand{\beq}{\begin{equation}}
\newcommand{\eeq}{\end{equation}}
\newcommand{\ba}{\begin{align}}
\newcommand{\ea}{\end{align}}
\newcommand{\abs}[1]{\lvert#1\rvert}
\renewcommand{\Re}{\text{\rm Re}}
\renewcommand{\ge}{\geqslant}
\renewcommand{\le}{\leqslant}
\newcommand{\norm}[1]{\left\Vert#1\right\Vert}
\newcommand{\Om}{\Omega}
\newcommand{\dOm}{{\partial\Omega}}
\newcommand{\ga}{\gamma}
\newcommand{\LOm}{L^2(\Om;d^nx)}
\newcommand{\LdOm}{L^2(\dOm;d^{n-1} \omega)}
\allowdisplaybreaks \numberwithin{equation}{section}
\newtheorem{theorem}{Theorem}[section]
\newtheorem{lemma}[theorem]{Lemma}
\newtheorem{corollary}[theorem]{Corollary}
\newtheorem{hypothesis}[theorem]{Hypothesis}
\theoremstyle{definition}
\newtheorem{remark}[theorem]{Remark}
\newtheorem{example}[theorem]{Example}
\begin{document}

\title[Some Remarks on a Paper by Filonov]
{Nonlocal Robin Laplacians and Some Remarks on a Paper by Filonov 
 on Eigenvalue Inequalities}
\author[F.\ Gesztesy and M.\ Mitrea]{Fritz Gesztesy and Marius Mitrea}
\address{Department of Mathematics,
University of Missouri, Columbia, MO 65211, USA}
\email{gesztesyf@missouri.edu}
\urladdr{http://www.math.missouri.edu/personnel/faculty/gesztesyf.html}
\address{Department of Mathematics, University of
Missouri, Columbia, MO 65211, USA}
\email{mitream@missouri.edu}
\urladdr{http://www.math.missouri.edu/personnel/faculty/mitream.html}
\thanks{Based upon work partially supported by the US National Science
Foundation under Grant Nos.\ DMS-0400639 and FRG-0456306.}
\dedicatory{Dedicated with great pleasure to Sergio Albeverio on the occasion of his 70th birthday}
\thanks{{\it J. Diff. Eq.} {\bf 247}, 2871--2896 (2009).}
\subjclass[2000]{Primary: 35P15, 47A10; Secondary: 35J25, 47A07.}
\keywords{Lipschitz domains, nonlocal Robin Laplacians, spectral analysis,
eigenvalue inequalities}

\begin{abstract}
The aim of this paper is twofold: First, we characterize an essentially optimal 
class of boundary operators $\Theta$ which give rise to self-adjoint  
Laplacians $-\Delta_{\Theta, \Om}$ in $L^2(\Om; d^n x)$ with (nonlocal and local) 
Robin-type boundary conditions on bounded Lipschitz domains $\Om\subset\bbR^n$, 
$n\in\bbN$, $n\geq 2$. Second, we extend Friedlander's inequalities between Neumann and Dirichlet Laplacian eigenvalues to those between nonlocal Robin and Dirichlet Laplacian eigenvalues associated with bounded Lipschitz domains $\Om$, following an approach introduced by Filonov for this type of problems. 
\end{abstract}

\maketitle

\section{Introduction}\label{s1}

In recent years, there has been a flurry of activity in connection with 2nd-order elliptic partial differential operators, particularly, Schr\"odinger--type operators on open domains 
$\Om\subset\bbR^n$, $n\in\bbN$, $n\geq 2$, with nonempty boundary 
$\partial\Om$, under various smoothness assumptions (resp., lack thereof) on 
$\Om$, and associated nonlocal Robin boundary conditions. We refer, for instance, 
to \cite{AM07}, \cite{AW03}, \cite{AW03a}, \cite{BD08}, \cite{DD97}, \cite{Da00}, 
\cite{Da06}, \cite{DK07}, \cite{DKM08}, \cite{GM08a}, \cite{GM09}, \cite{GS05}, 
\cite{GS08}, \cite{Ke09}, \cite{LS04}, \cite{LP08}, \cite{Wa06}, and the literature cited therein. 

If $\Om$ is minimally smooth, that is, a Lipschitz domain, these Robin-type boundary conditions are formally of the type 
\begin{equation}
\f{\partial u}{\partial\nu}\bigg|_\dOm + \Theta (u|_\dOm)=0   \lb{1.1}
\end{equation} 
in appropriate Sobolev spaces on the boundary $\dOm$, where $\nu$ denotes the outward pointing normal unit vector to $\partial\Om$, and $\Theta$ is an appropriate 
self-adjoint operator in $L^2(\partial\Omega;d^{n-1}\omega)$, with  $d^{n-1} \omega$ the surface measure on $\partial\Omega$. The boundary condition in 
\eqref{1.1} is called {\it local} and then resembles the familiar classical Robin boundary condition for smooth domains $\Om$, if $\Theta$ equals the operator of multiplication 
$M_{\theta}$ by an appropriate function $\theta$ on the boundary $\dOm$ (cf., e.g., \cite{Si78}). Otherwise, the  boundary condition \eqref{1.1} represents a {\it generalized} or 
{\it nonlocal} Robin boundary condition generated by the operator $\Theta$. The case $\Theta=0$ (resp., $\theta=0$), of course, corresponds to the case of Neumann boundary conditions on $\dOm$. The case of Dirichlet boundary conditions on $\dOm$, that is, the condition $u|_\dOm=0$ (formally corresponding to $\Theta=\infty$, resp., $\theta=\infty$) will also play a major role in this paper.   

Schr\"odinger operators on bounded Lipschitz domains $\Om$ with nonlocal Robin boundary conditions of the form \eqref{1.1}, have been very recently discussed in great detail in \cite{GM08a} and 
\cite{GM09}, and our treatment of nonlocal Robin Laplacians in this paper naturally builds upon these two papers. 

In addition to presenting a detailed approach to nonlocal Robin Laplacians on bounded Lipschitz domains, we also present an application to eigenvalue inequalities between the associated Robin and Dirichlet Laplacian eigenvalues, extending Friedlander's eigenvalue inequalities between Neumann and Dirichlet eigenvalues for bounded $C^1$-domains \cite{Fr91}, employing its extension to very general bounded domains due to Filonov \cite{Fi04}. We briefly review the relevant history of these eigenvalue inequalities. We denote by 
\begin{equation}\label{1.2}
0=\lambda_{N,\Om,1} < \lambda_{N,\Om,2}\leq\cdots\leq
\lambda_{N,\Om,j}\leq\lambda_{N,\Om,j+1}\leq\cdots
\end{equation}
the eigenvalues for the Neumann Laplacian $-\Delta_{N,\Om}$ in $L^2(\Om; d^n x)$, listed according to their multiplicity. Similarly,  
\begin{equation}\label{1.3}
0<\lambda_{D,\Om,1} < \lambda_{D,\Om,2}\leq\cdots\leq\lambda_{D,\Om,j}
\leq\lambda_{D,\Om,j+1}\leq\cdots
\end{equation}
denote the eigenvalues for the Dirichlet Laplacian $-\Delta_{D,\Om}$ in 
$L^2(\Om; d^n x)$, again enumerated according to their multiplicity.

Then, for any open bounded domain $\Om\subset\bbR^n$, the variational formulation of the Neumann and Dirichlet eigenvalue problem (in terms of Rayleigh quotients, cf.\ 
\cite[Sect.\ VI.1]{CH89}) immediately implies the inequalities
\begin{equation}
\lambda_{N,\Om,j} \leq \lambda_{D,\Om,j}, \quad j\in\bbN.   \lb{1.4}
\end{equation}
Moreover, P\'olya \cite{Po52} proved in 1952 that 
\begin{equation}
\lambda_{N,\Om,2} < \lambda_{D,\Om,1},   \lb{1.5}
\end{equation}
answering a question of Kornhauser and Stakgold \cite{KS52}. For a two-dimensional bounded convex domain $\Om \subset \bbR^2$, with a piecewise $C^2$-boundary $\dOm$, Payne \cite{Pa55} demonstrated in 1955 that
\begin{equation}
\lambda_{N,\Om,j+2} < \lambda_{D,\Om,j}, \quad j\in\bbN.   \lb{1.6}
\end{equation}
For domains $\Om$ with a $C^2$-boundary and $\dOm$ having a nonnegative mean curvature, Aviles \cite{Av86} showed in 1986 that 
\begin{equation}
\lambda_{N,\Om,j+1} < \lambda_{D,\Om,j}, \quad j\in\bbN.   \lb{1.7}
\end{equation}
This was reproved by Levine and Weinberger \cite{LW86} in 1986 who also showed that 
\begin{equation}
\lambda_{N,\Om,j+n} < \lambda_{D,\Om,j}, \quad j\in\bbN,   \lb{1.8}
\end{equation}
for smooth bounded convex domains $\Om$, as well as
\begin{equation}
\lambda_{N,\Om,j+n} \leq \lambda_{D,\Om,j}, \quad j\in\bbN,    \lb{1.9}
\end{equation}
for arbitrary bounded convex domains. In addition, they also proved inequalities of the type $\lambda_{N,\Om,j+m} < \lambda_{D,\Om,j}$, $j\in\bbN$, for all $1\leq m \leq n$ under appropriate assumptions on $\dOm$ in \cite{LW86} (see also \cite{Le88}). For additional eigenvalue inequalities we refer to Friedlander \cite{Fr92}, \cite{Fr00}.

In 1991, and most relevant to our paper, Friedlander \cite{Fr91} proved that actually 
\begin{equation}
\lambda_{N,\Om,j+1} \leq \lambda_{D,\Om,j}, \quad j\in\bbN,   \lb{1.10}
\end{equation}
for any bounded domain $\Om$ with a $C^1$-boundary $\dOm$. We also refer to Mazzeo \cite{Ma91} for an extension to certain smooth manifolds, and to 
Ashbaugh and Levine \cite{AL97} and Hsu and Wang \cite{HW01} for the case of subdomains of the $n$-dimensional sphere $S^n$ with a smooth boundary and nonnegative mean curvature. (For intriguing connections between these eigenvalue inequalities with the null variety of the Fourier transform of the characteristic function of the domain $\Om$, we also refer to \cite{BLP08}.) Finally, inequality \eqref{1.10} was extended to any open domain $\Om$ with finite volume, 
and with the embedding $H^1(\Om)\hookrightarrow L^2(\Om;d^n x)$ compact, by 
Filonov \cite{Fi04} in 2004, who also proved strict inequality in \eqref{1.10}, that is, 
\begin{equation}
\lambda_{N,\Om,j+1} < \lambda_{D,\Om,j}, \quad j\in\bbN.   \lb{1.11}
\end{equation}
We emphasize that Filonov's conditions on $\Omega$ are equivalent to 
$-\Delta_{N,\Om}$, defined as the unique self-adjoint operator associated with the Neumann sesquilinear form in $L^2(\Om; d^nx)$,
\begin{equation}
a_N(u,v) = \int_{\Om} d^n x \, \ol{(\nabla u)(x)} \cdot (\nabla v)(x), 
\quad u,v \in H^1(\Om),
\end{equation}
having a purely discrete spectrum, that is,
\begin{equation}
\sigma_{\rm ess}(-\Delta_{N,\Om}) = \emptyset 
\end{equation} 
(cf.\ also our discussion in Lemmas \ref{lB.1}, \ref{lB.2}), where 
$\sigma_{\rm ess}(\cdot)$ abbreviates the essential spectrum. 
While Friedlander used techniques based on the Dirichlet-to-Neumann map and an appropriate trial function argument, Filonov found an elementary new proof directly based on eigenvalue counting functions (and the same trial functions). Friedlander's result \eqref{1.10} was recently reconsidered by Arendt and Mazzeo \cite{AM07}, 
which in turn motivated our present investigation into an extension of Filonov's result 
\eqref{1.11} to nonlocal Robin Laplacians $-\Delta_{\Theta,\Om}$. In fact, if 
\begin{equation} \lb{1.12}
\lambda_{\Theta,\Om,1}\leq\lambda_{\Theta,\Om,2}\leq\cdots\leq
\lambda_{\Theta,\Om,j}\leq\lambda_{\Theta,\Om,j+1}\leq\cdots, 
\end{equation}
denote the eigenvalues of the nonlocal Robin Laplacian $-\Delta_{\Theta,\Om}$, counting multiplicity, we will prove that 
\begin{equation}\label{1.13}
\lambda_{\Theta,\Om,j+1}<\lambda_{D,\Om,j},\quad j\in\bbN, 
\end{equation}
assuming appropriate hypotheses on $\Theta$, including, for instance, 
\begin{equation}\lb{1.14}
\Theta\leq 0
\end{equation}
in the sense that $\langle f,\Theta f\rangle_{1/2} \leq 0$ for every
$f\in H^{1/2}(\partial\Omega)$. Here, $\langle \dott, \dott \rangle_{1/2}$ denotes the duality pairing between $H^{1/2}(\dOm)$ and 
$H^{-1/2}(\dOm)=\big(H^{1/2}(\dOm)\big)^*$. Filonov's result was recently generalized to the Heisenberg Laplacian on certain three-dimensional domains by Hansson \cite{Ha08}. 

Most recently, the relation between the eigenvalue counting functions of the Dirichlet and Neumann Laplacian originally established by Friedlander in \cite{Fr91}, was discussed in an abstract setting by Safarov \cite{Sa08} based on sequilinear forms and an abstract version of the Dirichlet-to-Neumann map. When applied to elliptic boundary value problems, his approach avoids the use of boundary trace operators and hence is not plagued by the usual regularity hypotheses on the boundary (such as Lipschitz boundaries or additional smoothness of the boundary). In particular, Safarov's approach permits the existence of an essential spectrum of the Neumann (resp., Robin) and Dirichlet Laplacians and then restricts the eigenvalue inequalities of the type \eqref{1.10} to those Dirichlet eigenvalues lying strictly beyond 
$\inf\big(\sigma_{\rm ess}(-\Delta_{\Theta,\Om})\big)$. 
Hence, Safarov's results appear to be in the nature of best possible in this context. In addition, as pointed out at the end in Remark \ref{r5.4}, Safarov's novel approach considerably improves upon conditions such as \eqref{1.14}. 

Condition \eqref{1.14} was anticipated by Filonov in the special case of local Robin Laplacians $-\Delta_{M_\theta,\Om}$, where 
$M_\theta$ equals the operator of multiplication by an appropriate real-valued function 
$\theta$ on the boundary $\dOm$. The case of local Robin Laplacians 
$-\Delta_{M_\theta,\Om}$ associated with $C^{2,\alpha}$-domains $\Om\subset\bbR^n$, $\alpha \in (0,1]$, was discussed by Levine \cite{Le88} in 1988. Assuming $(n-1)h(\xi) \geq  \theta(\xi)$, $\xi\in\partial\Om$, $h(\cdot)$ the mean curvature on $\partial\Om$, he established 
\begin{equation}
\lambda_{M_\theta,\Om,j+1} < \lambda_{D,\Om,j}, \quad j\in\bbN.   \lb{1.15}
\end{equation}
He also proved 
\begin{equation}\lb{1.16}
\lambda_{M_\theta,\Om,j+n}<\lambda_{D,\Om,j},\quad j\in\bbN, 
\end{equation}
under the additional assumption of convexity of $\Om$. (In addition, he derived  inequalities of the type $\lambda_{\Theta,\Om,j+m}<\lambda_{D,\Om,j}$, $j\in\bbN$, for all $1\leq m \leq n$, under appropriate conditions on $\Om$.) Similarly, in the case of local Robin Laplacians $-\Delta_{M_\theta,\Om}$ on smooth domains $\Om\subset S^n$ and $(n-1)h(\xi) \geq  \theta(\xi)$, $\xi\in\partial\Om$, Ashbaugh and Levine \cite{AL97} proved $\lambda_{M_\theta,\Om,j+1} \leq \lambda_{D,\Om,j}$, $j\in\bbN$, in 1997.

We conclude this introduction with a brief description of the content of each section:  Section \ref{s2} succinctly reviews the basic facts on sesquilinear forms and their associated self-adjoint operators. Sobolev spaces on bounded Lipschitz domains and on their boundaries are presented in a nutshell in Section \ref{s3}. Section \ref{s4} focuses on self-adjoint realizations of Laplacians with nonlocal Robin boundary conditions, and finally, Section \ref{s5} discusses the extension of Friedlander's eigenvalue inequalities between Neumann and Dirichlet eigenvalues to that of nonlocal Robin eigenvalues and Dirichlet eigenvalues for bounded Lipschitz domains, closely following a strategy of proof due to Filonov.

\section{Sesquilinear Forms and Associated Operators}
\lb{s2}

In this section we describe a few basic facts on sesquilinear forms and
linear operators associated with them.
Let $\cH$ be a complex separable Hilbert space with scalar product
$(\dott,\dott)_{\cH}$ (antilinear in the first and linear in the second
argument), $\cV$ a reflexive Banach space continuously and densely embedded
into $\cH$. Then also $\cH$ embeds continuously and densely into $\cV^*$.
That is,
\begin{equation}
\cV  \hookrightarrow \cH  \hookrightarrow \cV^*.     \lb{B.1}
\end{equation}
Here the continuous embedding $\cH\hookrightarrow \cV^*$ is accomplished via
the identification
\begin{equation}
\cH \ni v \mapsto (\dott,v)_{\cH} \in \cV^*,     \lb{B.2}
\end{equation}
and we use the convention in this manuscript that if $X$ denotes a Banach space, 
$X^*$ denotes the {\it adjoint space} of continuous  conjugate linear functionals on $X$,
also known as the {\it conjugate dual} of $X$.

In particular, if the sesquilinear form
\begin{equation}
{}_{\cV}\langle \dott, \dott \rangle_{\cV^*} \colon \cV \times \cV^* \to \bbC
\end{equation}
denotes the duality pairing between $\cV$ and $\cV^*$, then
\begin{equation}
{}_{\cV}\langle u,v\rangle_{\cV^*} = (u,v)_{\cH}, \quad u\in\cV, \;
v\in\cH\hookrightarrow\cV^*,   \lb{B.3}
\end{equation}
that is, the $\cV, \cV^*$ pairing
${}_{\cV}\langle \dott,\dott \rangle_{\cV^*}$ is compatible with the
scalar product $(\dott,\dott)_{\cH}$ in $\cH$.

Let $T \in\cB(\cV,\cV^*)$. Since $\cV$ is reflexive, $(\cV^*)^* = \cV$, one has
\begin{equation}
T \colon \cV \to \cV^*, \quad  T^* \colon \cV \to \cV^*   \lb{B.4}
\end{equation}
and
\begin{equation}
{}_{\cV}\langle u, Tv \rangle_{\cV^*}
= {}_{\cV^*}\langle T^* u, v\rangle_{(\cV^*)^*}
= {}_{\cV^*}\langle T^* u, v \rangle_{\cV}
= \ol{{}_{\cV}\langle v, T^* u \rangle_{\cV^*}}.
\end{equation}
{\it Self-adjointness} of $T$ is then defined by $T=T^*$, that is,
\begin{equation}
{}_{\cV}\langle u,T v \rangle_{\cV^*}
= {}_{\cV^*}\langle T u, v \rangle_{\cV}
= \ol{{}_{\cV}\langle v, T u \rangle_{\cV^*}}, \quad u, v \in \cV,    \lb{B.5}
\end{equation}
{\it nonnegativity} of $T$ is defined by
\begin{equation}
{}_{\cV}\langle u, T u \rangle_{\cV^*} \geq 0, \quad u \in \cV,    \lb{B.6}
\end{equation}
and {\it boundedness from below of $T$ by $c_T \in\bbR$} is defined by
\begin{equation}
{}_{\cV}\langle u, T u \rangle_{\cV^*} \geq c_T \|u\|^2_{\cH},
\quad u \in \cV.
\lb{B.6a}
\end{equation}
(By \eqref{B.3}, this is equivalent to
${}_{\cV}\langle u, T u \rangle_{\cV^*} \geq c_T \,
{}_{\cV}\langle u, u \rangle_{\cV^*}$, $u \in \cV$.)

Next, let the sesquilinear form $a(\dott,\dott)\colon\cV \times \cV \to \bbC$
(antilinear in the first and linear in the second argument) be
{\it $\cV$-bounded}, that is, there exists a $c_a>0$ such that
\begin{equation}
|a(u,v)| \le c_a \|u\|_{\cV} \|v\|_{\cV},  \quad u, v \in \cV.
\end{equation}
Then $\wti A$ defined by
\begin{equation}
\wti A \colon \begin{cases} \cV \to \cV^*, \\
\, v \mapsto \wti A v = a(\dott,v), \end{cases}    \lb{B.7}
\end{equation}
satisfies
\begin{equation}
\wti A \in\cB(\cV,\cV^*) \, \text{ and } \,
{}_{\cV}\big\langle u, \wti A v \big\rangle_{\cV^*}
= a(u,v), \quad  u, v \in \cV.    \lb{B.8}
\end{equation}
Assuming further that $a(\dott,\dott)$ is {\it symmetric}, that is,
\begin{equation}
a(u,v) = \ol{a(v,u)},  \quad u,v\in \cV,    \lb{B.9}
\end{equation}
and that $a$ is {\it $\cV$-coercive}, that is, there exists a constant
$C_0>0$ such that
\begin{equation}
a(u,u)  \geq C_0 \|u\|^2_{\cV}, \quad u\in\cV,    \lb{B.10}
\end{equation}
respectively, then,
\begin{equation}
\wti A \colon \cV \to \cV^* \, \text{ is bounded, self-adjoint, and boundedly
invertible.}    \lb{B.11}
\end{equation}
Moreover, denoting by $A$ the part of $\wti A$ in $\cH$ defined by
\begin{align}
\dom(A) = \big\{u\in\cV \,|\, \wti A u \in \cH \big\} \subseteq \cH, \quad
A= \wti A\big|_{\dom(A)}\colon \dom(A) \to \cH,   \lb{B.12}
\end{align}
then $A$ is a (possibly unbounded) self-adjoint operator in $\cH$ satisfying
\begin{align}
& A \geq C_0 I_{\cH},   \lb{B.13}  \\
& \dom\big(A^{1/2}\big) = \cV.  \lb{B.14}
\end{align}
In particular,
\begin{equation}
A^{-1} \in\cB(\cH).   \lb{B.15}
\end{equation}
The facts \eqref{B.1}--\eqref{B.15} are a consequence of the Lax--Milgram
theorem and the second representation theorem for symmetric sesquilinear forms.
Details can be found, for instance, in \cite[Sects.\ VI.3, VII.1]{DL00},
\cite[Ch.\ IV]{EE89}, and \cite{Li62}.

Next, consider a symmetric form $b(\dott,\dott)\colon \cV\times\cV\to\bbC$
and assume that $b$ is {\it bounded from below by $c_b\in\bbR$}, that is,
\begin{equation}
b(u,u) \geq c_b \|u\|_{\cH}^2, \quad u\in\cV.  \lb{B.19}
\end{equation}
Introducing the scalar product
$(\dott,\dott)_{\cV_b}\colon \cV\times\cV\to\bbC$
(and the associated norm $\|\cdot\|_{\cV_b}$) by
\begin{equation}
(u,v)_{\cV_b} = b(u,v) + (1- c_b)(u,v)_{\cH}, \quad u,v\in\cV,  \lb{B.20}
\end{equation}
turns $\cV$ into a pre-Hilbert space $(\cV; (\dott,\dott)_{\cV_b})$,
which we denote by
$\cV_b$. The form $b$ is called {\it closed} in $\cH$ if $\cV_b$ is actually
complete, and hence a Hilbert space. The form $b$ is called {\it closable}
in $\cH$ if it has a closed extension. If $b$ is closed in $\cH$, then
\begin{equation}
|b(u,v) + (1- c_b)(u,v)_{\cH}| \le \|u\|_{\cV_b} \|v\|_{\cV_b},
\quad u,v\in \cV,
\lb{B.21}
\end{equation}
and
\begin{equation}
|b(u,u) + (1 - c_b)\|u\|_{\cH}^2| = \|u\|_{\cV_b}^2, \quad u \in \cV,
\lb{B.22}
\end{equation}
show that the form $b(\dott,\dott)+(1 - c_b)(\dott,\dott)_{\cH}$ is a
symmetric, $\cV$-bounded, and $\cV$-coercive sesquilinear form. Hence,
by \eqref{B.7} and \eqref{B.8}, there exists a linear map
\begin{equation}
\wti B_{c_b} \colon \begin{cases} \cV_b \to \cV_b^*, \\
\hspace*{.51cm}
v \mapsto \wti B_{c_b} v = b(\dott,v) +(1 - c_b)(\dott,v)_{\cH},
\end{cases}
\lb{B.23}
\end{equation}
with
\begin{equation}
\wti B_{c_b} \in\cB(\cV_b,\cV_b^*) \, \text{ and } \,
{}_{\cV_b}\big\langle u, \wti B_{c_b} v \big\rangle_{\cV_b^*}
= b(u,v)+(1 -c_b)(u,v)_{\cH}, \quad  u, v \in \cV.    \lb{B.24}
\end{equation}
Introducing the linear map
\begin{equation}
\wti B = \wti B_{c_b} + (c_b - 1)\wti I \colon \cV_b\to\cV_b^*,
\lb{B.24a}
\end{equation}
where $\wti I\colon \cV_b\hookrightarrow\cV_b^*$ denotes
the continuous inclusion (embedding) map of $\cV_b$ into $\cV_b^*$, one
obtains a self-adjoint operator $B$ in $\cH$ by restricting $\wti B$ to $\cH$,
\begin{align}
\dom(B) = \big\{u\in\cV \,\big|\, \wti B u \in \cH \big\} \subseteq \cH, \quad
B= \wti B\big|_{\dom(B)}\colon \dom(B) \to \cH,   \lb{B.25}
\end{align}
satisfying the following properties:
\begin{align}
& B \geq c_b I_{\cH},  \lb{B.26} \\
& \dom\big(|B|^{1/2}\big) = \dom\big((B - c_bI_{\cH})^{1/2}\big)
= \cV,  \lb{B.27} \\
& b(u,v) = \big(|B|^{1/2}u, U_B |B|^{1/2}v\big)_{\cH}    \lb{B.28b} \\
& \hspace*{.97cm}
= \big((B - c_bI_{\cH})^{1/2}u, (B - c_bI_{\cH})^{1/2}v\big)_{\cH}
+ c_b (u, v)_{\cH}
\lb{B.28} \\
& \hspace*{.97cm}
= {}_{\cV_b}\big\langle u, \wti B v \big\rangle_{\cV_b^*},
\quad u, v \in \cV, \lb{B.28a} \\
& b(u,v) = (u, Bv)_{\cH}, \quad  u\in \cV, \; v \in\dom(B),  \lb{B.29} \\
& \dom(B) = \{v\in\cV\,|\, \text{there exists an $f_v\in\cH$ such that}  \no \\
& \hspace*{3.05cm} b(w,v)=(w,f_v)_{\cH} \text{ for all $w\in\cV$}\},
\lb{B.30} \\
& Bu = f_u, \quad u\in\dom(B),  \no \\
& \dom(B) \text{ is dense in $\cH$ and in $\cV_b$}.  \lb{B.31}
\end{align}
Properties \eqref{B.30} and \eqref{B.31} uniquely determine $B$.
Here $U_B$ in \eqref{B.28} is the partial isometry in the polar
decomposition of $B$, that is,
\begin{equation}
B=U_B |B|, \quad  |B|=(B^*B)^{1/2} \geq 0.   \lb{B.32}
\end{equation}
The operator $B$ is called the {\it operator associated with the form $b$}.

The norm in the Hilbert space $\cV_b^*$ is given by
\begin{equation}
\|\ell\|_{\cV_b^*}
= \sup \{|{}_{\cV_b}\langle u, \ell \rangle_{\cV_b^*}| \,|\,
\|u\|_{\cV_b} \le 1\}, \quad \ell \in \cV_b^*,   \lb{B.34}
\end{equation}
with associated scalar product,
\begin{equation}
(\ell_1,\ell_2)_{\cV_b^*}
= {}_{\cV_b}\big\langle
\big(\wti B+(1-c_b)\wti I\,\big)^{-1}
\ell_1, \ell_2 \big\rangle_{\cV_b^*},
\quad \ell_1, \ell_2 \in \cV_b^*.   \lb{B.35}
\end{equation}
Since
\begin{equation}
\big\|\big(\wti B + (1 - c_b)\wti I\,\big)v \big\|_{\cV_b^*}
= \|v\|_{\cV_b}, \quad v\in\cV,   \lb{B.36}
\end{equation}
the Riesz representation theorem yields
\begin{equation}
\big(\wti B+(1-c_b)\wti I\,\big)\in\cB(\cV_b,\cV_b^*) \,
\text{ and }\big(\wti B + (1 - c_b)\wti I\,\big) \colon \cV_b
\to \cV_b^* \, \text{ is unitary.}   \lb{B.37}
\end{equation}
In addition,
\begin{align}
\begin{split}
{}_{\cV_b}\big\langle u,\big(\wti B
+ (1 - c_b)\wti I\,\big) v \big\rangle_{\cV_b^*}
& = \big(\big(B+(1-c_b)I_{\cH}\big)^{1/2}u,
\big(B+(1-c_b)I_{\cH}\big)^{1/2}v \big)_{\cH}  \\
& = (u,v)_{\cV_b},  \quad  u, v \in \cV_b.   \lb{B.38}
\end{split}
\end{align}
In particular,
\begin{equation}
\big\|(B+(1-c_b)I_{\cH})^{1/2}u\big\|_{\cH} = \|u\|_{\cV_b},
\quad u \in \cV_b,   \lb{B.39}
\end{equation}
and hence
\begin{equation}
(B+(1-c_b)I_{\cH})^{1/2}\in\cB(\cV_b,\cH) \, \text{ and }
(B + (1 - c_b)I_{\cH})^{1/2} \colon \cV_b \to \cH \, \text{ is unitary.}
\lb{B.40}
\end{equation}
The facts \eqref{B.19}--\eqref{B.40} comprise the second representation
theorem of sesquilinear forms (cf.\ \cite[Sect.\ IV.2]{EE89},
\cite[Sects.\ 1.2--1.5]{Fa75}, and \cite[Sect.\ VI.2.6]{Ka80}).

A special but important case of nonnegative closed forms is obtained as
follows: Let $\cH_j$, $j=1,2$, be complex separable Hilbert spaces, and
$T\colon \dom(T)\to\cH_2$, $\dom(T)\subseteq \cH_1$, a densely defined
operator. Consider the nonnegative form
$a_T\colon \dom(T)\times \dom(T)\to\bbC$ defined by
\begin{equation}
a_T(u,v)=(Tu,Tv)_{\cH_2}, \quad u, v \in\dom(T).   \lb{B.42}
\end{equation}
Then the form $a_T$ is closed (resp., closable) in $\cH_1$ if and only if $T$ is.
If $T$ is closed, the unique nonnegative self-adjoint operator associated
with $a_T$ in $\cH_1$, whose existence is guaranteed by the second
representation theorem for forms, then equals $T^*T\geq 0$. In particular,
one obtains in addition to \eqref{B.42}, 
\begin{equation}
a_T(u,v) = (|T|u,|T|v)_{\cH_1}, \quad u, v \in\dom(T)=\dom(|T|).   \lb{B.43}
\end{equation}
Moreover, since
\begin{align}
\begin{split}
& b(u,v) +(1-c_b)(u,v)_{\cH}
= \big((B+(1-c_b)I_{\cH})^{1/2}u, (B+(1-c_b)I_{\cH})^{1/2}v\big)_{\cH}, \\
& \hspace*{6.1cm}  u, v \in \dom(b) = \dom\big(|B|^{1/2}\big)=\cV,   \lb{B.43a}
\end{split}
\end{align}
and $(B+(1-c_b)I_{\cH})^{1/2}$ is self-adjoint (and hence closed) in
$\cH$, a symmetric, $\cV$-bounded, and $\cV$-coercive form is densely
defined in $\cH\times\cH$ and closed in $\cH$ (a fact we will be using in the proof of
Theorem \ref{t2.3}). We refer to \cite[Sect.\ VI.2.4]{Ka80} and
\cite[Sect.\ 5.5]{We80} for details.

Next we recall that if $a_j$ are sesquilinear forms defined on
$\dom(a_j)$, $j=1,2$, bounded from below and closed, then also
\begin{equation}
(a_1+a_2)\colon \begin{cases} (\dom(a_1)\cap\dom(a_2))\times
(\dom(a_1)\cap\dom(a_2)) \to \bbC, \\
(u,v) \mapsto (a_1+a_2)(u,v) = a_1(u,v) + a_2(u,v) \end{cases}   \lb{B.44}
\end{equation}
is bounded from below and closed (cf., e.g., \cite[Sect.\ VI.1.6]{Ka80}).

Finally, we also recall the following perturbation theoretic fact:
Suppose $a$ is a sesquilinear form defined on $\cV\times\cV$, bounded
from below and closed, and let $b$ be a symmetric sesquilinear form
bounded with respect to $a$ with bound less than one, that is,
$\dom(b)\supseteq \cV\times\cV$, and that there exist $0\le \alpha < 1$ and
$\beta\ge 0$ such that
\begin{equation}
|b(u,u)| \le \alpha |a(u,u)| + \beta \|u\|_{\cH}^2, \quad u\in \cV.   \lb{B.45}
\end{equation}
Then
\begin{equation}
(a+b)\colon \begin{cases} \cV\times\cV \to \bbC, \\
\hspace*{.12cm} (u,v) \mapsto (a+b)(u,v) = a(u,v) + b(u,v) \end{cases}
\lb{B.46}
\end{equation}
defines a sesquilinear form that is bounded from below and closed
(cf., e.g., \cite[Sect.\,VI.1.6]{Ka80}). In the special case where $\alpha$
can be chosen arbitrarily small, the form $b$ is called infinitesimally
form bounded with respect to $a$.

Finally we turn to a brief discussion of operators with purely discrete spectra. We denote by $\# S$ the cardinality of the set $S$.

\begin{lemma}\label{lB.1}
Let $\cV$, $\cH$ be as in \eqref{B.1}, \eqref{B.2}.
Assume that the inclusion $\iota_{\cV}:\cV\hookrightarrow\cH$ is compact, and
that the sesquilinear form $a(\cdot,\cdot):\cV\times\cV\to\bbC$ is symmetric, 
$\cV$-bounded, and suppose that there exists $\kappa>0$ with the property that
\begin{equation}\label{B.50}
a_{\kappa}(u,v):=a(u,v)+\kappa\,(u,v)_{\cH},\quad u,v\in\cV,
\end{equation}
is $\cV$-coercive. Then the operator $A$ associated with
$a(\cdot,\cdot)$ is self-adjoint and bounded from below. In addition,
$A$ has purely discrete spectrum 
\begin{equation}
\sigma_{\rm ess} (A) = \emptyset,     \lb{B.51}
\end{equation}
and hence $\sigma(A)=\{\lambda_j(A)\}_{j\in\bbN}$, with $\lambda_j(A)\to\infty$ 
as $j\to\infty$,  
\begin{equation}\label{B.52}
-\kappa<\lambda_1(A)\leq\lambda_2(A)\leq\cdots\leq\lambda_j(A)
\leq\lambda_{j+1}(A)\leq\cdots \, .
\end{equation}
Here, the eigenvalues $\lambda_j(A)$ of $A$ are listed according to their multiplicity. 
Moreover, the following min-max principle holds:
\begin{equation}\label{B.53}
\lambda_j(A)=\min_{\stackrel{L_j\text{ subspace of }\cV}{\dim(L_j)=j}}
\Big(\max_{0\not=u\in L_j}R_a[u]\Big),\quad j\in\bbN,
\end{equation}
where $R_a[u]$ denotes the Rayleigh quotient
\begin{equation}\label{B.54}
R_a[u]:=\frac{a(u,u)}{\|u\|^2_{\cH}},\quad 0\not=u\in\cV.
\end{equation}
As a consequence, if $N_A$ is the eigenvalue counting function of $A$, that is,
\begin{equation}\label{B.55}
N_A(\lambda):=\#\,\{j\in\bbN\,|\,\lambda_j(A)\leq\lambda\},\quad\lambda\in\bbR,
\end{equation}
then for each $\lambda\in\bbR$ one has
\begin{equation}\label{B.56}
N_A(\lambda)=\max \big\{\dim (L)\in\bbN_0 \,\big|\, 
L \text{ a subspace of }  \cV \text{ with }
a(u,u)\leq\lambda\|u\|^2_{\cH}, \, u\in L\big\}.
\end{equation}
\end{lemma}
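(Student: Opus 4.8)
The plan is to exploit the fact that the shifted form $a_\kappa$ is symmetric, $\cV$-bounded, and $\cV$-coercive, so the machinery of \eqref{B.19}--\eqref{B.40} applies to $b:=a$ with $c_b=-\kappa$. Concretely, $a$ is bounded from below by $-\kappa$, and setting $(u,v)_{\cV_b}=a(u,v)+(1+\kappa)(u,v)_\cH$ turns $\cV$ into the Hilbert space $\cV_b$ (completeness of $\cV_b$ follows because, by $\cV$-boundedness and $\cV$-coercivity of $a_\kappa$, the norm $\|\cdot\|_{\cV_b}$ is equivalent to $\|\cdot\|_\cV$ and $\cV$ is already complete). Hence the associated operator $A$ is self-adjoint with $A\ge -\kappa I_\cH$, $\dom(A^{1/2})=\cV$ up to the shift, and by \eqref{B.40} the map $(A+(1+\kappa)I_\cH)^{1/2}\colon\cV_b\to\cH$ is unitary. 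The first substantive step is then to deduce \eqref{B.51}: write $A+(1+\kappa)I_\cH=\big[(A+(1+\kappa)I_\cH)^{1/2}\big]^2$, observe that $(A+(1+\kappa)I_\cH)^{-1/2}=\big[(A+(1+\kappa)I_\cH)^{1/2}\big]^{-1}\colon\cH\to\cV_b$ is bounded, and compose with the compact inclusion $\iota_\cV\colon\cV\hookrightarrow\cH$ (equivalently $\cV_b\hookrightarrow\cH$, since the norms are equivalent) to see that $(A+(1+\kappa)I_\cH)^{-1/2}\colon\cH\to\cH$ is compact; squaring, $(A+(1+\kappa)I_\cH)^{-1}\in\cB(\cH)$ is compact, which forces $\sigma_{\rm ess}(A)=\emptyset$ and the eigenvalue enumeration \eqref{B.52} with $\lambda_j(A)\to\infty$, the strict bound $-\kappa<\lambda_1(A)$ coming from $\cV$-coercivity of $a_\kappa$ (if $\lambda_1(A)=-\kappa$, the corresponding eigenfunction would give $a_\kappa(u,u)=0$ with $u\ne0$, contradicting coercivity).

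Next I would establish the min-max principle \eqref{B.53}. Since $A+(1+\kappa)I_\cH$ is a semibounded self-adjoint operator with purely discrete spectrum and compact resolvent, the classical Courant--Fischer theorem gives
\[
\mu_j:=\lambda_j(A)+(1+\kappa)=\min_{\substack{L_j\subseteq\dom((A+(1+\kappa)I_\cH)^{1/2})\\ \dim L_j=j}}\ \max_{0\ne u\in L_j}\frac{\big\|(A+(1+\kappa)I_\cH)^{1/2}u\big\|_\cH^2}{\|u\|_\cH^2},
\]
and $\dom((A+(1+\kappa)I_\cH)^{1/2})=\cV$ by \eqref{B.27}, while for $u\in\cV$ one has $\big\|(A+(1+\kappa)I_\cH)^{1/2}u\big\|_\cH^2=a(u,u)+(1+\kappa)\|u\|_\cH^2$ by \eqref{B.43a}/\eqref{B.38}. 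Subtracting the constant $(1+\kappa)$ from both sides, the $(1+\kappa)\|u\|_\cH^2$ term cancels against the shift and one is left precisely with \eqref{B.53} and the Rayleigh quotient \eqref{B.54}. The only care needed is to confirm that the abstract Courant--Fischer statement may indeed be taken over the form domain $\cV$ rather than the operator domain $\dom(A)$; this is standard and is exactly what \eqref{B.43a} licenses, since the form is closed.

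Finally, \eqref{B.56} is a formal rearrangement of \eqref{B.53}. Fix $\lambda\in\bbR$. If $N_A(\lambda)\ge k$, i.e.\ $\lambda_k(A)\le\lambda$, then taking $L$ to be the span of the first $k$ eigenfunctions gives a $k$-dimensional subspace of $\cV$ with $a(u,u)=\sum|c_i|^2\lambda_i(A)\le\lambda\sum|c_i|^2=\lambda\|u\|_\cH^2$ for all $u\in L$; hence the right-hand side of \eqref{B.56} is $\ge N_A(\lambda)$. Conversely, if $L\subseteq\cV$ has $\dim L=k$ and $a(u,u)\le\lambda\|u\|_\cH^2$ on $L$, then by \eqref{B.53} with $L_k=L$ we get $\lambda_k(A)\le\max_{0\ne u\in L}R_a[u]\le\lambda$, so $N_A(\lambda)\ge k$; hence the right-hand side is $\le N_A(\lambda)$. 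Combining the two inequalities yields \eqref{B.56}. The main obstacle in the whole argument is really the bookkeeping around the spectral shift by $\kappa$ (or $1+\kappa$): one must consistently apply the form results of Section~\ref{s2} to the \emph{shifted} coercive form and only at the end translate back, making sure the coercivity constant of $a_\kappa$ is used to get the \emph{strict} lower bound $-\kappa<\lambda_1(A)$ and that the form domain $\cV$ — not $\dom(A)$ — is the space over which the min-max is taken.
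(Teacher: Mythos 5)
Your proposal is correct and follows essentially the same route as the paper: reduce everything to the shifted $\cV$-coercive form $a_\kappa$ via the Section~\ref{s2} machinery and then translate the spectral data back by the shift. The only difference is that you spell out the standard facts the paper simply cites from \cite{DL00} (compact resolvent via the unitary map in \eqref{B.40}, which is in effect the argument of Lemma~\ref{lB.2}, the Courant--Fischer principle over the form domain, and the derivation of \eqref{B.56} from \eqref{B.53}), and these details are all accurate.
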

\begin{proof}
Analogous claims for the operator $B$ associated with the $\cV$-coercive
form $a_{\kappa}(\cdot,\cdot)$ are well-known (cf., e.g., 
\cite[Sect.\ VI.3.2.5, Ch.\ VII]{DL00}).
Then the corresponding claims for $A$ follow from these, after observing that
$B=A+\kappa I_{\cH}$, $R_{a_\kappa}[u]=R_a[u]+\kappa$,
$\lambda_j(B)=\lambda_j(A)+\kappa$, and $N_B(\lambda)=N_A(\lambda-\kappa)$, 
$\lambda\in\bbR$.
\end{proof}

A closely related result is provided by the following elementary observations: Let 
$c\in\bbR$ and $B\geq c I_{\cH}$ be a self-adjoint operator in $\cH$, and introduce the sesquilinear form $b$ in $\cH$ associated with $B$ via
\begin{align}
\begin{split}
& b(u,v) = \big((B - c I_{\cH})^{1/2} u, (B - c I_{\cH})^{1/2} v\big)_{\cH}
+ c (u,v)_{\cH}, \\  
& u,v \in \dom(b) = \dom\big(|B|^{1/2}\big).    \lb{B.57}
\end{split}
\end{align}
Given $B$ and $b$, one introduces the Hilbert space $\cH_b \subseteq \cH$ by 
\begin{align}
& \cH_b =\big(\dom\big(|B|^{1/2}\big), (\cdot,\cdot)_{\cH_b}\big),   \no \\ 
& (u,v)_{\cH_b} =  b(u,v) + (1-c) (u,v)_{\cH}   \lb{B.58} \\
& \hspace*{1.2cm} = \big((B - c I_{\cH})^{1/2} u, (B - c I_{\cH})^{1/2} v\big)_{\cH} 
+ (u,v)_{\cH}    \no \\
& \hspace*{1.2cm} = \big((B + (1-c) I_{\cH})^{1/2} u, (B + (1-c) I_{\cH})^{1/2} v\big)_{\cH}. 
\no 
\end{align}
Of course, $\cH_b$ plays a role analogous to $\cV_b$ in \eqref{B.20}. As in \eqref{B.40} one then observes that 
\begin{equation} 
(B + (1 - c)I_{\cH})^{1/2} \colon \cH_b \to \cH \, \text{ is unitary.}     \lb{B.59}
\end{equation}

\begin{lemma} \lb{lB.2} 
Let $\cH$, $B$, $b$, and $\cH_b$ be as in \eqref{B.57}--\eqref{B.59}. Then $B$ has purely discrete spectrum, that is, $\sigma_{\rm ess} (B) = \emptyset$, 
if and only if $\cH_b \hookrightarrow \cH$ compactly. 
\end{lemma}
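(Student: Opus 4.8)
The plan is to reduce the statement to the standard criterion that a self‑adjoint operator which is bounded from below has purely discrete spectrum if and only if its resolvent is compact, and then to recognize the square root of that resolvent as the inclusion map $\iota_b\colon\cH_b\hookrightarrow\cH$, using the unitarity recorded in \eqref{B.59}. Since $B\geq cI_{\cH}$ one has $B+(1-c)I_{\cH}\geq I_{\cH}$, so $c-1\in\rho(B)$ and $R_0:=(B+(1-c)I_{\cH})^{-1}$ is a nonnegative bounded self‑adjoint operator in $\cH$. As $B$ is self‑adjoint and bounded from below, the cited criterion (cf., e.g., \cite[Sect.\ VI.3.2.5, Ch.\ VII]{DL00}) gives that $\sigma_{\rm ess}(B)=\emptyset$ if and only if $R_0$ is compact. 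Moreover, the nonnegative square root of a compact nonnegative self‑adjoint operator is again compact (its eigenvalues are the square roots of those of $R_0$, hence still tend to $0$), while conversely $R_0=R_0^{1/2}R_0^{1/2}$; since $R_0^{1/2}=(B+(1-c)I_{\cH})^{-1/2}$, we conclude that $\sigma_{\rm ess}(B)=\emptyset$ if and only if $(B+(1-c)I_{\cH})^{-1/2}\in\cB(\cH)$ is compact.

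It remains to show that $(B+(1-c)I_{\cH})^{-1/2}\in\cB(\cH)$ is compact if and only if $\iota_b$ is compact. As in \eqref{B.58}, the underlying set of $\cH_b$ equals $\dom\big(|B|^{1/2}\big)=\dom\big((B+(1-c)I_{\cH})^{1/2}\big)=\ran\big((B+(1-c)I_{\cH})^{-1/2}\big)$, and since $B+(1-c)I_{\cH}\geq I_{\cH}$ the inclusion $\iota_b\colon\cH_b\hookrightarrow\cH$ is a contraction, in particular continuous. By \eqref{B.59} the map $\Lambda:=(B+(1-c)I_{\cH})^{1/2}\colon\cH_b\to\cH$ is unitary, hence so is $\Lambda^{-1}\colon\cH\to\cH_b$; and re‑viewing the output of $\Lambda^{-1}$ as an element of $\cH$ by means of $\iota_b$ simply reproduces the operator supplied by the functional calculus, i.e., $\iota_b\circ\Lambda^{-1}=(B+(1-c)I_{\cH})^{-1/2}$ in $\cB(\cH)$, whence also $\iota_b=(B+(1-c)I_{\cH})^{-1/2}\circ\Lambda$. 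Since $\Lambda,\Lambda^{-1}$ are bounded, these two identities show that $\iota_b$ is compact if and only if $(B+(1-c)I_{\cH})^{-1/2}\in\cB(\cH)$ is compact. Combining this with the first paragraph yields $\sigma_{\rm ess}(B)=\emptyset$ if and only if $\cH_b\hookrightarrow\cH$ compactly, as claimed. (Alternatively, the implication ``$\cH_b\hookrightarrow\cH$ compactly $\Rightarrow\sigma_{\rm ess}(B)=\emptyset$'' is also immediate from Lemma \ref{lB.1} applied with $\cV=\cH_b$ and $a=b$, since $b$ is symmetric and $\cH_b$‑bounded and $b(\cdot,\cdot)+\kappa(\cdot,\cdot)_{\cH}$ is $\cH_b$‑coercive for all sufficiently large $\kappa>0$.)

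The only point calling for care is the identification at the start of the second paragraph: one must make sure that the operator $(B+(1-c)I_{\cH})^{-1/2}$ produced by the Borel functional calculus --- whose range is a priori only a dense linear subspace of $\cH$ --- actually has range equal to $\cH_b$ and, with target $\cH_b$, coincides with the unitary inverse of $\Lambda$ from \eqref{B.59}; equivalently, that the norm on $\cH_b$ is $\|(B+(1-c)I_{\cH})^{1/2}\,\cdot\,\|_{\cH}$. Both facts are already contained in \eqref{B.57}--\eqref{B.59}. The remaining ingredients --- the resolvent‑compactness criterion for self‑adjoint operators bounded from below, and the stability of compactness under composition with bounded operators --- are entirely routine.
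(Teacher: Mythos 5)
Your argument is correct and is essentially the paper's own proof: both factor the inclusion $\cH_b\hookrightarrow\cH$ through the unitary $(B+(1-c)I_{\cH})^{1/2}\colon\cH_b\to\cH$ of \eqref{B.59}, reduce compactness of the embedding to compactness of $(B+(1-c)I_{\cH})^{-1/2}\in\cB(\cH)$, and invoke the equivalence of compact resolvent with $\sigma_{\rm ess}(B)=\emptyset$. The only difference is that you spell out the routine intermediate step that $R_0$ is compact if and only if $R_0^{1/2}$ is, which the paper passes over in its chain of equivalences.
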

\begin{proof}
Denoting by $J_{\cH_b}=I_{\cH}|_{\cH_b}$ the inclusion map from $\cH_b$ into 
$\cH$, one infers that
\begin{equation}
\cH \stackrel{(B+(1-c)I_{\cH})^{-1/2}}{-\!\!\!-\!\!\!-\!\!\!-\!\!\!-\!\!\!-\!\!\!-\!\!\!-\!\!\!-\!\!\!-\!\!\!-\!\!\!\longrightarrow} \cH_b 
\stackrel{J_{\cH_b}}{\hookrightarrow} \cH.
\end{equation}
Thus, one concludes that 
\begin{align}
& \cH_b \hookrightarrow \cH\, \text{compactly}  \iff J_{\cH_b} \in \cB_{\infty}(\cH_b,\cH) 
\no \\
& \quad \iff \big[J_{\cH_b}(B+(1-c)I_{\cH})^{-1/2}\big] (B+(1-c)I_{\cH})^{1/2} 
\in \cB_{\infty}(\cH_b,\cH)   \no \\
& \quad \iff J_{\cH_b}(B+(1-c)I_{\cH})^{-1/2} \in \cB_{\infty}(\cH) 
\iff (B+(1-c)I_{\cH})^{-1/2} \in \cB_{\infty}(\cH)   \no \\ 
& \quad \iff (B - zI_{\cH})^{-1} \in \cB_{\infty}(\cH), \quad  
z\in\bbC\backslash{\sigma(B)}   \no \\
& \quad \iff \sigma_{\rm ess} (B) = \emptyset, 
\end{align}
since $(B + (1 - c_b)I_{\cH})^{1/2} \colon \cH_b \to \cH$ is unitary by \eqref{B.59}. 
\end{proof}

Throughout this paper we are employing the following notation: The
Banach spaces of bounded and compact linear operators on a Hilbert space $\cH$ are
denoted by $\cB(\cH)$ and $\cB_\infty(\cH)$, respectively. The analogous notation 
$\cB(\cX_1,\cX_2)$, $\cB_\infty (\cX_1,\cX_2)$, etc., will be used for bounded and compact 
operators between two Banach spaces $\cX_1$ and $\cX_2$. 
Moreover, $\cX_1\hookrightarrow \cX_2$ denotes the continuous embedding
of the Banach space $\cX_1$ into the Banach space $\cX_2$.

\section{Sobolev Spaces in Lipschitz Domains}
\label{s3}

The goal of this section is to introduce the relevant material pertaining
to Sobolev spaces $H^s(\Omega)$ and $H^r(\partial\Omega)$ corresponding to subdomains $\Om$ of $\bbR^n$, $n\in\bbN$, and discuss various trace results. 

We start by recalling some basic facts in connection with Sobolev spaces
corresponding to open subsets $\Om\subset\bbR^n$, $n\in\bbN$. For an arbitrary 
$m\in\bbN\cup\{0\}$, we follow the customary
way of defining $L^2$-Sobolev spaces of order $\pm m$ in $\Om$ as
\begin{align}\label{hGi-1}
H^m(\Om) &:= \big\{u\in L^2(\Om;d^nx)\,\big|\,\partial^\alpha u\in L^2(\Om;d^nx)
\mbox{ for } 0\leq|\alpha|\leq m\big\}, \\
H^{-m}(\Om) &:=\bigg\{u\in\cD^{\prime}(\Om)\,\bigg|\,u=\sum_{|\alpha|\leq m}
\partial^\alpha u_{\alpha}, \mbox{ with }u_\alpha\in L^2(\Om;d^nx), \, 
0\leq|\alpha|\leq m\bigg\},
\label{hGi-2}
\end{align}
equipped with natural norms (cf., e.g., \cite[Ch.\ 3]{AF03}, \cite[Ch.\ 1]{Ma85}). Here 
$\cD^\prime(\Om)$ denotes the usual set of distributions on $\Omega\subseteq \bbR^n$. Then one sets
\begin{equation}\label{hGi-3}
H^m_0(\Om):=\,\mbox{the closure of $C^\infty_0(\Om)$ in $H^m(\Om)$}, 
\quad m\in\bbN \cup \{0\}.
\end{equation}
As is well-known, all three spaces above are Banach, reflexive and, in addition,
\begin{equation}\label{hGi-4}
\big(H^m_0(\Om)\big)^*=H^{-m}(\Om).
\end{equation}
Again, see, for instance, \cite[Ch.\ 3]{AF03}, \cite[Sect.\ 1.1.14]{Ma85}. Throughout this paper, we agree to use the {\it adjoint} (rather than the dual) space $X^*$ of
a Banach space $X$.

One recalls that an open, nonempty, bounded set $\Omega\subset\bbR^n$ 
is called a {\it bounded Lipschitz domain} if the following property holds: 
There exists an open covering $\{{\mathcal O}_j\}_{1\leq j\leq N}$
of the boundary $\partial\Omega$ of $\Om$ such that for every
$j\in\{1,...,N\}$, ${\mathcal O}_j\cap\Omega$ coincides with the portion
of ${\mathcal O}_j$ lying in the over-graph of a Lipschitz function
$\varphi_j:\bbR^{n-1}\to\bbR$ (considered in a new system of coordinates
obtained from the original one via a rigid motion). The number
$\max\,\{\|\nabla\varphi_j\|_{L^\infty(\bbR^{n-1};d^{n-1}x')} \,|\,1\leq j\leq N\}$
is said to represent the {\it Lipschitz character} of $\Omega$.

The classical theorem of Rademacher of almost everywhere differentiability 
of Lipschitz functions ensures that, for any  Lipschitz domain $\Omega$, the
surface measure $d^{n-1} \omega$ is well-defined on  $\partial\Omega$ and
that there exists an outward  pointing normal vector $\nu$ at
almost every point of $\partial\Omega$.

In the remainder of this paper we shall make the following assumption:

\begin{hypothesis}\label{h2.1}
Let $n\in\bbN$, $n\geq 2$, and assume that $\Om\subset{\bbR}^n$ is
a bounded Lipschitz domain.
\end{hypothesis}

As regards $L^2$-based Sobolev spaces of fractional order $s\in\bbR$,
in a bounded {\it Lipschitz domain} $\Om\subset\bbR^n$ we set
\begin{align}\label{HH-h1}
H^{s}(\bbR^n) &:=\bigg\{U\in \cS^\prime(\bbR^n)\,\bigg|\,
\norm{U}_{H^{s}(\bbR^n)}^2 = \int_{\bbR^n}d^n\xi\,
\big|\hatt U(\xi)\big|^2\big(1+\abs{\xi}^{2s}\big)<\infty \bigg\},
\\
H^{s}(\Om) &:=\big\{u\in \cD^\prime(\Om)\,\big|\,u=U|_\Om\text{ for some }
U\in H^{s}(\bbR^n)\big\}. 
\label{HH-h2}
\end{align}
Here $\cS^\prime(\bbR^n)$ is the space of tempered distributions on $\bbR^n$,
and $\hatt U$ denotes the Fourier transform of $U\in\cS^\prime(\bbR^n)$.
These definitions are consistent with \eqref{hGi-1}--\eqref{hGi-2}.
Moreover, so is
\begin{equation}\label{incl-xxx}
H^{s}_0(\Omega):= \big\{u\in H^{s}(\bbR^n)\,\big|\, \supp(u)\subseteq\ol{\Omega}\big\},
\quad s\in\bbR,
\end{equation}
equipped with the natural norm induced by $H^{s}(\bbR^n)$, in relation to
\eqref{hGi-3}. One also has 
\begin{equation}\label{incl-Ya}
\big(H^{s}_0(\Omega)\big)^*=H^{-s}(\Omega),\quad s\in\bbR 
\end{equation}
(cf., e.g., \cite{JK95}). For a bounded Lipschitz domain $\Omega\subset\bbR^n$ 
it is known that
\begin{equation}\label{dual-xxx}
\bigl(H^{s}(\Omega)\bigr)^*=H^{-s}(\Omega), \quad - 1/2 <s< 1/2.
\end{equation}
See \cite{Tr02} for this and other related properties. 

To discuss Sobolev spaces on the boundary of a Lipschitz domain, consider
first the case when $\Omega\subset\bbR^n$ is the domain lying above the graph
of a Lipschitz function $\varphi\colon\bbR^{n-1}\to\bbR$. In this setting,
we define the Sobolev space $H^s(\partial\Omega)$ for $0\leq s\leq 1$,
as the space of functions $f\in L^2(\partial\Omega;d^{n-1}\omega)$ with the
property that $f(x',\varphi(x'))$, as a function of $x'\in\bbR^{n-1}$,
belongs to $H^s(\bbR^{n-1})$. This definition is easily adapted to the case
when $\Omega$ is a Lipschitz domain whose boundary is compact,
by using a smooth partition of unity. Finally, for $-1\leq s\leq 0$, we set
\begin{equation}\label{A.6}
H^s(\dOm) = \big(H^{-s}(\dOm)\big)^*, \quad -1 \le s \le 0.
\end{equation}
 From the above characterization of $H^s(\partial\Omega)$ it follows that
any property of Sobolev spaces (of order $s\in[-1,1]$) defined in Euclidean
domains, which are invariant under multiplication by smooth, compactly
supported functions as well as compositions by bi-Lipschitz diffeomorphisms,
readily extends to the setting of $H^s(\partial\Omega)$ (via localization and
pull-back). As a concrete example, for each Lipschitz domain $\Omega$ 
with compact boundary, one has  
\begin{equation} \label{EQ1}
H^s(\partial\Omega)\hookrightarrow L^2(\partial\Omega;d^{n-1} \omega)
\, \text{ compactly if }\,0<s\leq 1.  
\end{equation}
For additional background 
information in this context we refer, for instance, to \cite{Au04}, 
\cite{Au06}, \cite[Chs.\ V, VI]{EE89}, \cite[Ch.\ 1]{Gr85}, 
\cite[Ch.\ 3]{Mc00}, \cite[Sect.\ I.4.2]{Wl87}.

Assuming Hypothesis \ref{h2.1}, we introduce the boundary trace
operator $\ga_D^0$ (the Dirichlet trace) by
\begin{equation}
\ga_D^0\colon C(\ol{\Om})\to C(\dOm), \quad \ga_D^0 u = u|_\dOm.   \label{2.5}
\end{equation}
Then there exists a bounded linear operator $\gamma_D$
\begin{align}
\begin{split}
& \ga_D\colon H^{s}(\Om)\to H^{s-(1/2)}(\dOm) \hookrightarrow \LdOm,
\quad 1/2<s<3/2, \label{2.6}  \\
& \ga_D\colon H^{3/2}(\Om)\to H^{1-\varepsilon}(\dOm) \hookrightarrow \LdOm,
\quad \varepsilon \in (0,1) 
\end{split}
\end{align}
(cf., e.g., \cite[Theorem 3.38]{Mc00}), whose action is compatible with that of 
$\ga_D^0$. That is, the two Dirichlet trace  operators coincide on the intersection 
of their domains. Moreover, we recall that
\begin{equation}\label{2.6a}
\ga_D\colon H^{s}(\Om)\to H^{s-(1/2)}(\dOm) \, \text{ is onto for $1/2<s<3/2$}.
\end{equation}

Next, retaining Hypothesis \ref{h2.1}, we introduce the operator
$\ga_N$ (the strong Neumann trace) by
\begin{equation}\label{2.7}
\ga_N = \nu\cdot\ga_D\nabla \colon H^{s+1}(\Om)\to \LdOm, \quad 1/2<s<3/2, 
\end{equation}
where $\nu$ denotes the outward pointing normal unit vector to
$\partial\Om$. It follows from \eqref{2.6} that $\ga_N$ is also a
bounded operator. We seek to extend the action of the Neumann trace
operator \eqref{2.7} to other (related) settings. To set the stage,
assume Hypothesis \ref{h2.1} and observe that the inclusion
\begin{equation}\label{inc-1}
\iota:H^{s_0}(\Omega)\hookrightarrow \bigl(H^r(\Omega)\bigr)^*,\quad
s_0>-1/2, \; r>1/2,
\end{equation}
is well-defined and bounded. We then introduce the weak Neumann trace
operator
\begin{equation}\label{2.8}
\wti\ga_N\colon\big\{u\in H^{s+1/2}(\Om)\,\big|\, \Delta u\in H^{s_0}(\Om)\big\}
\to H^{s-1}(\dOm),\quad s\in(0,1),\; s_0>-1/2,
\end{equation}
as follows: Given $u\in H^{s+1/2}(\Om)$ with $\Delta u \in H^{s_0}(\Om)$
for some $s\in(0,1)$ and $s_0>-1/2$, we set (with $\iota$ as in
\eqref{inc-1} for $r:=3/2-s>1/2$)
\begin{equation}\label{2.9}
\langle\phi,\wti\ga_N u \rangle_{1-s}
={}_{H^{1/2-s}(\Om)}\langle\nabla\Phi,\nabla u\rangle_{(H^{1/2-s}(\Om))^*}
+ {}_{H^{3/2-s}(\Om)}\langle\Phi,\iota(\Delta u)\rangle_{(H^{3/2-s}(\Om))^*},
\end{equation}
for all $\phi\in H^{1-s}(\dOm)$ and $\Phi\in H^{3/2-s}(\Om)$ such that
$\ga_D\Phi=\phi$. We note that the first pairing on the right-hand side of  
\eqref{2.9} is meaningful since
\begin{equation}\label{2.9JJ}
\bigl(H^{1/2-s}(\Om)\bigr)^*=H^{s-1/2}(\Om),\quad s\in (0,1),
\end{equation}
and that the definition \eqref{2.9} is independent of the particular
extension $\Phi$ of $\phi$, and that $\wti\ga_N$ is a bounded extension
of the Neumann trace operator $\ga_N$ defined in \eqref{2.7}.

\section{Laplace Operators with Nonlocal Robin-Type \\ Boundary Conditions}
\label{s4}

In this section we primarily focus on various properties of general
Laplacians $-\Delta_{\Theta,\Om}$ in $L^2(\Om;d^n x)$ including Dirichlet,
$-\Delta_{D,\Om}$, and Neumann, $-\Delta_{N,\Om}$, Laplacians, nonlocal
Robin-type Laplacians, and Laplacians corresponding to classical Robin
boundary conditions associated with bounded Lipschitz domains 
$\Omega\subset \bbR^n$.

For simplicity of notation we will denote the identity operators in $\LOm$ and
$\LdOm$ by $I_{\Om}$ and $I_{\dOm}$, respectively. Also, in the sequel, the
sesquilinear form
\begin{equation}
\langle \dott, \dott \rangle_{s}={}_{H^{s}(\dOm)}\langle\dott,\dott
\rangle_{H^{-s}(\dOm)}\colon H^{s}(\dOm)\times H^{-s}(\dOm)
\to \bbC, \quad s\in [0,1],
\end{equation}
(antilinear in the first, linear in the second factor), will denote the duality
pairing between $H^s(\dOm)$ and
\begin{equation}\lb{2.3}
H^{-s}(\dOm)=\big(H^s(\dOm)\big)^*,\quad s\in [0,1],
\end{equation}
such that
\begin{align}\lb{2.4}
\begin{split} 
& \langle f,g\rangle_s=\int_{\dOm} d^{n-1}\omega(\xi)\,\ol{f(\xi)}g(\xi),  \\
& f\in H^s(\dOm),\,g\in L^2(\dOm;d^{n-1}\omega)\hookrightarrow 
H^{-s}(\dOm), \; s\in [0,1],
\end{split} 
\end{align}
where, as before, $d^{n-1}\omega$ stands for the surface measure on $\dOm$.

We also recall the notational conventions summarized at the end of Section \ref{s2}. 

\begin{hypothesis} \lb{h2.2}
Assume Hypothesis \ref{h2.1}, suppose that $\delta>0$ is a given number,
and assume that $\Theta\in\cB\big(H^{1/2}(\dOm),H^{-1/2}(\dOm)\big)$
is a self-adjoint operator which can be written as
\begin{equation}\label{Filo-1}
\Theta=\Theta_1+\Theta_2+\Theta_3,
\end{equation}
where $\Theta_j$, $j=1,2,3$, have the following properties: There exists a closed sesquilinear form $a_{\Theta_0}$ in $\LdOm$,
with domain $H^{1/2}(\dOm)\times H^{1/2}(\dOm)$, bounded from below
by $c_{\Theta_0}\in\bbR$ $($hence, $a_{\Theta_0}$
is symmetric$)$ such that if $\Theta_0\ge c_{\Theta_0}I_{\dOm}$ denotes
the self-adjoint operator in $\LdOm$ uniquely associated with $a_{\Theta_0}$
$($cf. \eqref{B.25}$)$, then $\Theta_1=\wti\Theta_0$, the extension
of $\Theta_0$ to an operator in $\cB\big(H^{1/2}(\dOm),H^{-1/2}(\dOm)\big)$
$($as discussed in \eqref{B.24a} and \eqref{B.28a}$)$. In addition,
\begin{equation}\label{Filo-2}
\Theta_2\in\cB_{\infty}\big(H^{1/2}(\dOm),H^{-1/2}(\dOm)\big),
\end{equation}
whereas $\Theta_3\in\cB\big(H^{1/2}(\dOm),H^{-1/2}(\dOm)\big)$ satisfies
\begin{equation}\label{Filo-3}
\|\Theta_3\|_{\cB(H^{1/2}(\dOm),H^{-1/2}(\dOm))}<\delta.
\end{equation}
\end{hypothesis}

We recall the following useful result.

\begin{lemma}\lb{l2.2a}
Assume Hypothesis \ref{h2.1}. Then for every $\varepsilon >0$
there exists a $\beta(\varepsilon)>0$
$($with $\beta(\varepsilon)\underset{\varepsilon\downarrow 0}{=}
\Oh(1/\varepsilon)$$)$ such that
\begin{equation}\lb{2.38c}
\|\gamma_D u\|_{\LdOm}^2 \le
\varepsilon \|\nabla u\|_{\LOm^n}^2 + \beta(\varepsilon) \|u\|_{\LOm}^2, 
\quad u\in H^1(\Om).
\end{equation}
\end{lemma}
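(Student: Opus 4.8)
The plan is to reduce the inequality \eqref{2.38c} to a standard compactness-plus-continuity argument based on the compact trace embedding. First I would recall the two facts already available in the excerpt: the Dirichlet trace operator $\gamma_D$ maps $H^1(\Om) = H^s(\Om)\big|_{s=1}$ boundedly into $H^{1/2}(\dOm)$ (a special case of \eqref{2.6} read at $s=1$, or more directly the bounded trace $\gamma_D\colon H^1(\Om)\to H^{1/2}(\dOm)$), and that $H^{1/2}(\dOm)\hookrightarrow L^2(\dOm;d^{n-1}\omega)$ compactly by \eqref{EQ1}. Composing, $\gamma_D\colon H^1(\Om)\to L^2(\dOm;d^{n-1}\omega)$ is a compact operator. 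Separately, the embedding $H^1(\Om)\hookrightarrow L^2(\Om;d^nx)$ is compact (Rellich--Kondrachov for bounded Lipschitz domains).

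Next I would argue by contradiction in the usual way for such "interpolation-type" inequalities with a free small parameter. Suppose \eqref{2.38c} fails for some fixed $\varepsilon_0>0$: then for every $k\in\bbN$ there is $u_k\in H^1(\Om)$ with
\begin{equation}
\|\gamma_D u_k\|_{\LdOm}^2 > \varepsilon_0 \|\nabla u_k\|_{\LOm^n}^2 + k \|u_k\|_{\LOm}^2 .
\end{equation}
Normalize $\|\gamma_D u_k\|_{\LdOm}=1$. The inequality then forces $\|\nabla u_k\|_{\LOm^n}^2 \le 1/\varepsilon_0$ and $\|u_k\|_{\LOm}^2 \le 1/k \to 0$, so $(u_k)$ is bounded in $H^1(\Om)$ and $u_k\to 0$ strongly in $L^2(\Om;d^nx)$. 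Passing to a subsequence, $u_k\rightharpoonup u$ weakly in $H^1(\Om)$; the strong $L^2$ convergence identifies the weak limit as $u=0$. By compactness of $\gamma_D\colon H^1(\Om)\to \LdOm$, along a further subsequence $\gamma_D u_k\to \gamma_D u = 0$ strongly in $\LdOm$, contradicting $\|\gamma_D u_k\|_{\LdOm}=1$. Hence for each $\varepsilon>0$ some finite $\beta(\varepsilon)$ works, which is precisely \eqref{2.38c}.

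For the quantitative claim $\beta(\varepsilon)=\Oh(1/\varepsilon)$ as $\varepsilon\downarrow 0$, the bare contradiction argument is not enough, so I would instead run a scaling/dilation argument. Fix once and for all a constant $C_\Om$ with $\|\gamma_D u\|_{\LdOm}^2 \le C_\Om \|u\|_{H^1(\Om)}^2 = C_\Om(\|\nabla u\|_{\LOm^n}^2 + \|u\|_{\LOm}^2)$ (bounded trace), which in itself only gives the weak form with $\varepsilon$ and $\beta$ both equal to $C_\Om$. To trade a small $\varepsilon$ against a large $\beta$ one uses a rescaled version of the trace inequality on dilated copies $t\Om$ of $\Om$, $t\in(0,1]$: tracking the powers of $t$ coming from the $n$-dimensional volume measure $d^nx$, the $(n-1)$-dimensional surface measure $d^{n-1}\omega$, and the gradient, one finds $\|\gamma_D u\|_{L^2(\dOm)}^2 \le C_\Om\big(t\|\nabla u\|_{\LOm^n}^2 + t^{-1}\|u\|_{\LOm}^2\big)$ for $u\in H^1(\Om)$, after changing variables back from $t\Om$ to $\Om$. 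Choosing $t=\varepsilon/C_\Om$ (assuming $\varepsilon\le C_\Om$, which is the only regime of interest) yields \eqref{2.38c} with $\beta(\varepsilon)=C_\Om^2/\varepsilon$, giving the asserted $\Oh(1/\varepsilon)$ behavior; for $\varepsilon>C_\Om$ one simply takes $\beta(\varepsilon)=C_\Om$.

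The main obstacle is the scaling step: one must be careful that the trace inequality holds with a constant $C_\Om$ that is genuinely uniform over the family $\{t\Om\}_{t\in(0,1]}$, since the Lipschitz character of $t\Om$ equals that of $\Om$ (dilations preserve the Lipschitz constants of the defining graphs), so the same $C_\Om$ indeed serves all scales; then the change-of-variables bookkeeping with the measures $d^nx$ and $d^{n-1}\omega$ must be done correctly to land on the $t\|\nabla u\|^2 + t^{-1}\|u\|^2$ split. Alternatively, if one only wants \eqref{2.38c} without the rate, the compactness argument above suffices and is cleaner; I would present the compactness argument for existence of $\beta(\varepsilon)$ and the scaling argument for the $\Oh(1/\varepsilon)$ refinement. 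Since this is a "recalled" lemma in the paper, citing a standard reference (e.g. the trace theorems in \cite{Mc00} together with interpolation, or \cite{GM08a}) is also acceptable in lieu of the full details.
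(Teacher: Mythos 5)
Your compactness-by-contradiction argument for the mere existence of a finite $\beta(\varepsilon)$ is essentially the route the paper itself takes: the paper does not prove the lemma directly, but proves the abstract Lemma \ref{l.WW} by exactly this weak-convergence/compactness contradiction and then specializes it with $\cV=H^1(\Om)$, $\cW=L^2(\Om;d^nx)$, $K=\gamma_D^*\gamma_D$, $T=\iota$ (see \eqref{K-u5}--\eqref{K-u6}); for the quantitative claim $\beta(\varepsilon)=\Oh(1/\varepsilon)$ it explicitly defers to \cite{GM08a}, conceding that the compactness route only yields a ``less descriptive'' constant. So your first half coincides with the paper's argument, and your scaling argument is your own substitute for the cited quantitative proof.

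That scaling step, however, is written with the dilation in the wrong direction. Applying the trace inequality on $t\Om$ to $v(y)=u(y/t)$ and changing variables back, the surface measure contributes $t^{n-1}$, the gradient term $t^{n-2}$, and the volume term $t^{n}$, so one obtains $\|\gamma_D u\|^2_{\LdOm}\le C_{t\Om}\big(t^{-1}\|\nabla u\|^2_{\LOm^n}+t\,\|u\|^2_{\LOm}\big)$ --- the opposite split from the one you state. For $t\in(0,1]$ this makes the gradient coefficient large, and in that shrinking regime the uniformity claim also fails: the Lipschitz character of the defining graphs is scale-invariant, but the trace constant is not, and it degenerates like $1/t$ as the domain shrinks. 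The repair is simply to dilate outward, $t\in[1,\infty)$, where the constant is uniform (transport a transversal vector field), and then choose $t\sim C_\Om/\varepsilon$ to land on $\beta(\varepsilon)=\Oh(1/\varepsilon)$. Even more directly, the rate follows without any scaling from the divergence theorem: choose $h\in C^1\big(\ol{\Om};\bbR^n\big)$ with $h\cdot\nu\ge\kappa>0$ $\omega$-a.e.\ on $\dOm$ (such a field exists for any bounded Lipschitz domain); then $\kappa\,\|\gamma_D u\|^2_{\LdOm}\le\int_{\Om}d^nx\,\mathrm{div}\big(|u|^2h\big)\le C\big(2\|u\|_{\LOm}\|\nabla u\|_{\LOm^n}+\|u\|^2_{\LOm}\big)$, and Cauchy's inequality gives \eqref{2.38c} with $\beta(\varepsilon)=\Oh(1/\varepsilon)$, with a constant that can be read off from the Lipschitz character; this is in essence the proof in \cite{GM08a} to which the paper refers.
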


A proof from which it is possible to read off how the constant
$\beta(\varepsilon)$ depends on the Lipschitz character of $\Om$ appears
in \cite{GM08a}. Below we discuss a general abstract scheme which yields
results of this type, albeit with a less descriptive constant
$\beta(\varepsilon)$. The lemma below is inspired by \cite[Lemma\ 2.3]{AM07}: 

\begin{lemma}\lb{l.WW}
Let $\cV$ be a reflexive Banach space, $\cW$ a Banach space, 
assume that $K\in\cB_{\infty}(\cV,\cV^*)$, and that $T\in\cB(\cV,\cW)$ is
one-to-one. Then for every $\varepsilon>0$ there exists $C_{\varepsilon}>0$
such that
\begin{equation}\lb{K-u1}
\big|{}_{\cV}\langle u,Ku\rangle_{\cV^*}\big|\leq\varepsilon\|u\|^2_{\cV}
+C_{\varepsilon}\|Tu\|^2_{\cW},\quad u\in\cV.
\end{equation}
\end{lemma}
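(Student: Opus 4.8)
The plan is to argue by contradiction, extracting a sequence that violates \eqref{K-u1} for some fixed $\varepsilon>0$, and then using the compactness of $K$ to reach an absurdity. So suppose \eqref{K-u1} fails; then there exists $\varepsilon_0>0$ and a sequence $(u_k)_{k\in\bbN}\subset\cV$ such that
\begin{equation}\lb{WW-contra}
\big|{}_{\cV}\langle u_k,Ku_k\rangle_{\cV^*}\big|
> \varepsilon_0\|u_k\|^2_{\cV} + k\,\|Tu_k\|^2_{\cW},\quad k\in\bbN.
\end{equation}
By homogeneity we may normalize $\|u_k\|_{\cV}=1$ for all $k$. Since $\cV$ is reflexive, the bounded sequence $(u_k)$ has a weakly convergent subsequence (which we do not relabel), say $u_k\rightharpoonup u$ in $\cV$. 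Because $K\in\cB_\infty(\cV,\cV^*)$, weak convergence is upgraded: $Ku_k\to Ku$ strongly in $\cV^*$. Hence ${}_{\cV}\langle u_k,Ku_k\rangle_{\cV^*}\to {}_{\cV}\langle u,Ku\rangle_{\cV^*}$, using that the pairing of a weakly convergent sequence against a strongly convergent one converges. In particular the left-hand side of \eqref{WW-contra} stays bounded, so dividing \eqref{WW-contra} by $k$ forces $\|Tu_k\|_{\cW}\to 0$.

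On the other hand, $T\in\cB(\cV,\cW)$ is in particular weakly continuous, so $Tu_k\rightharpoonup Tu$ in $\cW$; combined with $Tu_k\to 0$ strongly, the weak limit is $Tu=0$, and since $T$ is one-to-one this gives $u=0$. Therefore ${}_{\cV}\langle u_k,Ku_k\rangle_{\cV^*}\to {}_{\cV}\langle 0,K0\rangle_{\cV^*}=0$. But \eqref{WW-contra} together with the normalization $\|u_k\|_{\cV}=1$ yields $\big|{}_{\cV}\langle u_k,Ku_k\rangle_{\cV^*}\big|>\varepsilon_0$ for every $k$, contradicting the convergence to $0$. This contradiction establishes \eqref{K-u1}.

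The only delicate point — and the one I would state carefully rather than the normalization bookkeeping — is the upgrade from weak to strong convergence of $Ku_k$: a compact operator maps weakly convergent sequences to norm-convergent ones, and this is exactly where the hypothesis $K\in\cB_\infty(\cV,\cV^*)$ is used. Everything else (reflexivity to extract the weak subsequence, weak continuity of the bounded operator $T$, injectivity of $T$ to conclude $u=0$) is routine. Note that no self-adjointness or symmetry of $K$ is needed, and the constant $C_\varepsilon$ is not explicit — this is the "less descriptive constant" alluded to in the remark preceding the lemma, in contrast with the quantitative $\beta(\varepsilon)=\Oh(1/\varepsilon)$ of Lemma \ref{l2.2a}. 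Applying the result with $\cV=H^1(\Om)$, $\cW=L^2(\Om;d^nx)$, $T$ the (injective) identity embedding, and $K$ built from the compact trace map $\gamma_D:H^1(\Om)\to L^2(\dOm;d^{n-1}\omega)$ recovers an inequality of the type \eqref{2.38c}.
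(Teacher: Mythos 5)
Your proof is correct and follows essentially the same route as the paper's: a contradiction sequence normalized to $\|u_k\|_{\cV}=1$, weak compactness from reflexivity, the compactness of $K$ to upgrade $Ku_k$ to strong convergence in $\cV^*$ (hence ${}_{\cV}\langle u_k,Ku_k\rangle_{\cV^*}\to{}_{\cV}\langle u,Ku\rangle_{\cV^*}$), the bound $\|Tu_k\|_{\cW}^2\le k^{-1}\big|{}_{\cV}\langle u_k,Ku_k\rangle_{\cV^*}\big|$ forcing $Tu=0$ and thus $u=0$ by injectivity of $T$, contradicting the lower bound $\varepsilon_0$. No gaps; the remarks on where compactness and injectivity enter match the paper's argument exactly.
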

\begin{proof}
Seeking a contradiction, assume that there exist $\varepsilon>0$
along with a family of vectors $u_j\in\cV$, $\|u_j\|_{\cV}=1$, $j\in\bbN$,
for which
\begin{equation}\lb{K-u2}
\big|{}_{\cV}\langle u_j,Ku_j \rangle_{\cV^*}\big|\geq\varepsilon
+j\|Tu_j\|^2_{\cW},\quad j\in\bbN.
\end{equation}
Furthermore, since $\cV$ is reflexive, there is no loss of generality in
assuming that there exists $u\in\cV$ such that $u_j\to u$ as $j\to\infty$,
weakly in $\cV$ (cf., e.g., \cite[Theorem\ 1.13.5]{Me98}). In addition, since $T$ (and hence $T^*$) is bounded, one concludes that $Tu_j\to Tu$ as $j\to\infty$ weakly in $\cW$, as is clear from 
\begin{equation}
{}_{\cW}\langle T u_j, \Lambda\rangle_{\cW^*} 
= {}_{\cV}\langle u_j, T^* \Lambda\rangle_{\cV^*} \underset{j\to \infty} \longrightarrow 
{}_{\cV}\langle u, T^* \Lambda\rangle_{\cV^*} 
= {}_{\cW}\langle T u, \Lambda\rangle_{\cW^*}, \quad \Lambda \in \cW^*.
\end{equation}  
Moreover, since $K$ is compact, we may choose a subsequence of 
$\{u_j\}_{j\in\bbN}$ (still denoted by $\{u_j\}_{j\in\bbN}$) such that 
$Ku_j\to Ku$ as $j\to\infty$, strongly in $\cV^*$. This, in turn, yields  that
\begin{equation}\lb{K-u3}
{}_{\cV}\langle u_j,Ku_j\rangle_{\cV^*}\to
{}_{\cV}\langle u,Ku\rangle_{\cV^*} \,\mbox{ as } \, j\to\infty.
\end{equation}
Together with
\begin{equation}\lb{K-u4}
\|Tu_j\|^2_{\cW}\leq j^{-1}
\big|{}_{\cV}\langle u_j,Ku_j\rangle_{\cV^*}\big|,
\quad j\in\bbN,
\end{equation}
this also shows that $Tu_j\to 0$ as $j\to\infty$, in $\cW$.
Hence, $Tu=0$ in $\cW$ which forces $u=0$, since $T$ is one-to-one.
Given these facts, we note that, on the one hand,
we have ${}_{\cV}\langle u_j,Ku_j\rangle_{\cV^*}\to 0$ as $j\to\infty$
by \eqref{K-u3}, while on the other hand
$ \Bigl|{}_{\cV}\langle u_j,Ku_j\rangle_{\cV^*}\Bigr|\geq\varepsilon$
for every $j\in\bbN$ by \eqref{K-u2}. This contradiction concludes the proof. 
\end{proof}

Parenthetically, we note that Lemma \ref{l2.2a} (with a less
precise description of the constant $\beta(\varepsilon)$) follows
from Lemma \ref{l.WW} by taking
\begin{equation}\lb{K-u5}
\cV:=H^1(\Om),\quad \cW:=L^2(\Om,d^nx),
\end{equation}
and, with $\gamma_D\in\cB_{\infty}\big(H^1(\Om),L^2(\dOm;d^{n-1}\omega)\big)$
denoting the Dirichlet trace,
\begin{equation}\lb{K-u6}
K:=\gamma_D^*\gamma_D\in\cB_{\infty}\big(H^1(\Om),\bigl(H^1(\Om)\bigr)^*\big),
\quad T:=\iota:H^1(\Om)\hookrightarrow L^2(\Om,d^nx),
\end{equation}
the inclusion operator.

\begin{lemma}\lb{l.WW2}
Assume Hypothesis \ref{h2.2}, where the number $\delta>0$ is taken
to be sufficiently small relative to the Lipschitz character of $\Om$.
Consider the sesquilinear form $a_\Theta(\dott,\dott)$ defined
on $H^1(\Om)\times H^1(\Om)$ by
\begin{equation}\lb{2.22}
a_\Theta(u,v):=\int_{\Om} d^nx\,\ol{(\nabla u)(x)}\cdot(\nabla v)(x)
+\big\langle\gamma_D u,\Theta\gamma_D v \big\rangle_{1/2},
\quad u,v\in H^1(\Om).
\end{equation}
Then there exists $\kappa>0$ with the property that the form
\begin{equation}\label{FI-5}
a_{\Theta,\kappa}(u,v):=a_\Theta(u,v)+\kappa\,(u,v)_{L^2(\Om;d^nx)},
\quad u,v\in H^1(\Om),
\end{equation}
is $H^1(\Om)$-coercive.

As a consequence, the form \eqref{2.22} is symmetric, $H^1(\Om)$-bounded,
bounded from below, and closed in $L^2(\Om; d^n x)$.
\end{lemma}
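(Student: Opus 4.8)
The plan is to decompose $a_\Theta$ according to the decomposition $\Theta = \Theta_1 + \Theta_2 + \Theta_3$ of Hypothesis \ref{h2.2} and handle each boundary term by comparison with the (nonnegative, and up to an additive $\kappa\|u\|_{L^2(\Om)}^2$ term, $H^1(\Om)$-coercive) Dirichlet form $\int_\Om d^nx\,\ol{(\nabla u)(x)}\cdot(\nabla v)(x)$, using the trace estimate of Lemma \ref{l2.2a}. First I would write, for $u \in H^1(\Om)$,
\begin{equation}
a_\Theta(u,u) = \|\nabla u\|_{L^2(\Om)^n}^2 + \big\langle\gamma_D u,\Theta_1\gamma_D u\big\rangle_{1/2} + \big\langle\gamma_D u,\Theta_2\gamma_D u\big\rangle_{1/2} + \big\langle\gamma_D u,\Theta_3\gamma_D u\big\rangle_{1/2},
\end{equation}
and estimate each of the last three terms from below by a quantity of the form $-\eps\|\nabla u\|_{L^2(\Om)^n}^2 - C_\eps\|u\|_{L^2(\Om)}^2$ with $\eps$ as small as we please (except that the $\Theta_3$ contribution will only yield a fixed small multiple of $\|\nabla u\|^2$, controlled by $\delta$).

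For the term involving $\Theta_1 = \wti\Theta_0$: since $a_{\Theta_0}$ is bounded from below by $c_{\Theta_0}$ on $H^{1/2}(\dOm)$, one has $\big\langle\gamma_D u,\wti\Theta_0\gamma_D u\big\rangle_{1/2} = a_{\Theta_0}(\gamma_D u,\gamma_D u) \geq c_{\Theta_0}\|\gamma_D u\|_{\LdOm}^2$ by \eqref{B.28a}. If $c_{\Theta_0}\ge 0$ this term is already nonnegative; if $c_{\Theta_0} < 0$, apply Lemma \ref{l2.2a} with $\eps$ chosen so that $|c_{\Theta_0}|\eps$ is small, giving $\big\langle\gamma_D u,\Theta_1\gamma_D u\big\rangle_{1/2} \geq -|c_{\Theta_0}|\big(\eps\|\nabla u\|_{L^2(\Om)^n}^2 + \beta(\eps)\|u\|_{L^2(\Om)}^2\big)$. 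For the compact term $\Theta_2$: by Lemma \ref{l.WW} applied with $\cV := H^{1/2}(\dOm)$, $\cW := \LdOm$, $K := \Theta_2 \in \cB_\infty(H^{1/2}(\dOm),H^{-1/2}(\dOm))$ and $T$ the compact (hence in particular bounded and injective) inclusion $H^{1/2}(\dOm)\hookrightarrow\LdOm$ from \eqref{EQ1}, one gets, for every $\eta>0$, a constant $C_\eta$ with $\big|\big\langle\gamma_D u,\Theta_2\gamma_D u\big\rangle_{1/2}\big| \leq \eta\|\gamma_D u\|_{H^{1/2}(\dOm)}^2 + C_\eta\|\gamma_D u\|_{\LdOm}^2$; then using boundedness of $\gamma_D\colon H^1(\Om)\to H^{1/2}(\dOm)$ to bound $\|\gamma_D u\|_{H^{1/2}(\dOm)}^2$ by a constant times $\|u\|_{H^1(\Om)}^2 = \|\nabla u\|^2 + \|u\|_{L^2(\Om)}^2$, and absorbing by choosing $\eta$ small together with one more application of Lemma \ref{l2.2a} on the remaining $\|\gamma_D u\|_{\LdOm}^2$, one obtains a bound $-\eps'\|\nabla u\|^2 - C\|u\|_{L^2(\Om)}^2$. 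For $\Theta_3$: simply estimate $\big|\big\langle\gamma_D u,\Theta_3\gamma_D u\big\rangle_{1/2}\big| \leq \|\Theta_3\|\,\|\gamma_D u\|_{H^{1/2}(\dOm)}^2 \leq \delta\, C_{\gamma_D}^2\,\|u\|_{H^1(\Om)}^2$, where $C_{\gamma_D}$ is the norm of $\gamma_D\colon H^1(\Om)\to H^{1/2}(\dOm)$; this contributes $-\delta C_{\gamma_D}^2(\|\nabla u\|^2 + \|u\|_{L^2(\Om)}^2)$.

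Collecting the estimates, there exist constants $c\in(0,1)$ (coming from the $\Theta_3$ and the $\eps$-terms, which is where the smallness of $\delta$ relative to the Lipschitz character enters, since $C_{\gamma_D}$ depends only on that character) and $\kappa>0$ such that $a_\Theta(u,u) \geq c\|\nabla u\|_{L^2(\Om)^n}^2 - \kappa\|u\|_{L^2(\Om)}^2$ for all $u\in H^1(\Om)$; adding $\kappa\|u\|_{L^2(\Om)}^2$ and noting $c\|\nabla u\|^2 + \|u\|_{L^2(\Om)}^2 \geq \min(c,1)\|u\|_{H^1(\Om)}^2$ gives $H^1(\Om)$-coercivity of $a_{\Theta,\kappa}$, i.e. \eqref{FI-5}. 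Symmetry of $a_\Theta$ follows from self-adjointness of $\Theta$ (and reality of the Dirichlet form), and $H^1(\Om)$-boundedness from boundedness of $\Theta\in\cB(H^{1/2}(\dOm),H^{-1/2}(\dOm))$ together with boundedness of $\gamma_D$; boundedness from below is immediate from the coercivity estimate. Finally, closedness in $L^2(\Om;d^nx)$ follows since $a_{\Theta,\kappa}$ is a symmetric, $H^1(\Om)$-bounded, $H^1(\Om)$-coercive form, hence closed by the discussion around \eqref{B.43a}, and subtracting the bounded form $\kappa(\cdot,\cdot)_{L^2(\Om)}$ preserves closedness by \eqref{B.45}, \eqref{B.46}. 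The main obstacle is the bookkeeping in the absorption argument for $\Theta_2$: one must be careful that the constant $\eta$ multiplying $\|\gamma_D u\|_{H^{1/2}(\dOm)}^2$ is controlled independently of the later choices so that the $\|\nabla u\|^2$ contributions genuinely add up to a coefficient strictly less than $1$, and that the requirement on $\delta$ is dictated purely by the Lipschitz character (via $C_{\gamma_D}$), not by $\Theta$ itself.
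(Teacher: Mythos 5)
Your proposal is correct and follows essentially the same route as the paper: decompose according to $\Theta=\Theta_1+\Theta_2+\Theta_3$, bound the $\Theta_1$ term from below by $c_{\Theta_0}\|\gamma_D u\|^2_{\LdOm}$ and absorb via the trace estimate of Lemma \ref{l2.2a}, treat $\Theta_2$ by an $\varepsilon$-absorption based on Lemma \ref{l.WW}, and use the smallness of $\delta$ (relative to $\|\gamma_D\|_{\cB(H^1(\Om),H^{1/2}(\dOm))}$) for $\Theta_3$, then conclude symmetry, boundedness, and closedness exactly as you do. The only (harmless) deviation is in the $\Theta_2$ step: you apply Lemma \ref{l.WW} on the boundary, with $K=\Theta_2$ and $T\colon H^{1/2}(\dOm)\hookrightarrow \LdOm$, and then pull back through $\gamma_D$ plus one more use of Lemma \ref{l2.2a}, whereas the paper applies it directly in the bulk with $K=\gamma_D^*\Theta_2\gamma_D\in\cB_{\infty}\big(H^1(\Om),\bigl(H^1(\Om)\bigr)^*\big)$ and $T=\iota\colon H^1(\Om)\hookrightarrow \LOm$, which yields the needed inequality in a single application.
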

\begin{proof}
We shall show that $\kappa>0$ can be chosen large enough so that
\begin{align}\lb{2.22Y.1}
\begin{split} 
&\frac16\|u\|^2_{H^1(\Om)}\leq\frac{1}{3}\int_{\Om}d^nx\,|(\nabla u)(x)|^2
+\frac{\kappa}{3}\int_{\Om}d^nx\,|u(x)|^2+
\big\langle\gamma_D u,\Theta_j\gamma_D v \big\rangle_{1/2},  \\
& \hspace*{7.7cm}  u\in H^1(\Om), \; j=1,2,3, 
\end{split}
\end{align}
where $\Theta_j$, $j=1,2,3$, are as introduced in Hypothesis \ref{h2.2}.
Summing up these three inequalities then proves that the form \eqref{FI-5} is
indeed $H^1(\Om)$-coercive. To this end, we assume first $j=1$ and recall
that there exists $c_{\Theta_0}\in\bbR$ such that
\begin{equation}\label{PdPd-1}
\big\langle\ga_D u,\Theta_1\,\ga_D u\big\rangle_{1/2}
\geq c_{\Theta_0}\|\ga_D 
u\|_{L^2(\dOm;d^{n-1}\omega)}^2,\quad u\in H^1(\Om).
\end{equation}
Thus, in this case, it suffices to show that
\begin{align}\lb{PdPd-2}
& \max\,\{-c_{\Theta_0}\,,\,0\}\,\|\ga_D u\|_{L^2(\dOm;d^{n-1}\omega)}^2
+\frac{1}{6}\|u\|^2_{H^1(\Om)}
\no \\
&\quad 
\leq\frac{1}{3}\int_{\Om}d^nx\,|(\nabla u)(x)|^2
+\frac{\kappa}{3}\int_{\Om}d^nx\,|u(x)|^2,
\quad u\in H^1(\Om),
\end{align}
or, equivalently, that  
\begin{align}\lb{PdPd-3}
& \max\,\{-c_{\Theta_0}\,,\,0\}\,\|\ga_D u\|_{L^2(\dOm;d^{n-1}\omega)}^2
\no \\
& \quad
\leq\frac{1}{6}\int_{\Om}d^nx\,|(\nabla u)(x)|^2
+\frac{2\kappa-1}{6}\int_{\Om}d^nx\,|u(x)|^2,\quad u\in H^1(\Om),
\end{align}
with the usual convention,
\begin{equation}
\|u\|^2_{H^1(\Om)} = \|\nabla u\|^2_{_{L^2(\Om;d^n  x)^n}} 
+ \|u\|^2_{_{L^2(\Om;d^n x)}}, \quad u\in H^1(\Om).
\end{equation}
The fact that there exists $\kappa>0$ for which \eqref{PdPd-3} holds
follows directly from Lemma \ref{l2.2a}.

Next, we observe that in the case where $j=2,3$, estimate \eqref{2.22Y.1} is
implied by
\begin{equation}\lb{2.22Y.2}
\big|\big\langle\gamma_D u,\Theta_j\gamma_D u\big\rangle_{1/2}\big|
\leq\frac{1}{6}\int_{\Om}d^nx\,|(\nabla u)(x)|^2
+\frac{2\kappa-1}{6}\int_{\Om}d^nx\,|u(x)|^2,\quad u\in H^1(\Om),
\end{equation}
or, equivalently, by 
\begin{equation}\lb{2.22Y.3}
\big|\big\langle\gamma_D u,\Theta_j\gamma_D u\big\rangle_{1/2}\big|
\leq\frac{1}{6}\|u\|^2_{H^1(\Om)}
+\frac{\kappa-1}{3}\|u\|^2_{L^2(\Om;d^nx)},\quad u\in H^1(\Om).
\end{equation}
When $j=2$, in which case 
$\Theta_2\in\cB_{\infty}\big(H^1(\Om),\bigl(H^1(\Om)\bigr)^*\big)$,
we invoke Lemma \ref{l.WW} with $\cV$, $\cW$ as in \eqref{K-u5} 
and, with $\gamma_D\in\cB\big(H^1(\Om),H^{1/2}(\dOm)\big)$
denoting the Dirichlet trace,
\begin{equation}\lb{PdPd-4}
K:=\gamma_D^*\Theta_2\gamma_D
\in\cB_{\infty}\big(H^1(\Om),\bigl(H^1(\Om)\bigr)^*\big),
\quad T:=\iota:H^1(\Om)\hookrightarrow L^2(\Om,d^nx),
\end{equation}
the inclusion operator. Then, with $\varepsilon=1/6$ and
$\kappa:=3C_{1/6}+1$, estimate \eqref{K-u1} yields \eqref{2.22Y.3} for $j=2$.

Finally, consider \eqref{2.22Y.3} in the case where $j=3$ and note that 
by hypothesis,
\begin{align}\lb{2.22Y.4}
\big|\big\langle\gamma_D u,\Theta_3\gamma_D u\big\rangle_{1/2}\big|
&\leq 
\|\Theta_3\|_{\cB(H^{1/2}(\dOm),H^{-1/2}(\dOm))}\|\gamma_D 
u\|^2_{H^{1/2}(\dOm)}
\no \\
&\leq 
\delta \|\gamma_D\|^2_{\cB(H^1(\Om),H^{1/2}(\dOm))}\|u\|^2_{H^1(\Om)}, 
\quad u\in H^1(\Om).
\end{align}
Thus \eqref{2.22Y.3} also holds for $j=3$ if
\begin{equation}\lb{2.22Y.5}
0<\delta \leq \f{1}{6} \|\gamma_D\|^{-2}_{\cB(H^1(\Om),H^{1/2}(\dOm))} \, 
\text{ and } \, \kappa>1.  
\end{equation}
This completes the justification of \eqref{2.22Y.1}, and hence 
finishes the proof.  
\end{proof}

Next, we turn to a discussion of nonlocal Robin Laplacians in bounded 
Lipschitz subdomains of $\bbR^n$. Concretely, we describe a family of
self-adjoint Laplace operators $-\Delta_{\Theta,\Om}$ in $L^2(\Om;d^nx)$
indexed by the boundary operator $\Theta$. We will refer to
$-\Delta_{\Theta,\Om}$ as the nonlocal Robin Laplacian.

\begin{theorem}\lb{t2.3}
Assume Hypothesis \ref{h2.2}, where the number $\delta>0$ is taken
to be sufficiently small relative to the Lipschitz character of $\Om$.
Then the nonlocal Robin Laplacian, $-\Delta_{\Theta,\Om}$, defined by
\begin{align}\lb{2.20}
& -\Delta_{\Theta,\Om}=-\Delta,    \\
& \, \dom(-\Delta_{\Theta,\Om})=
\big\{u\in H^1(\Om) \,\big|\, \Delta u\in L^2(\Om;d^nx),\,
\big(\wti\gamma_N+\Theta \gamma_D\big)u=0\text{ in $H^{-1/2}(\dOm)$}\big\} \no
\end{align}
is self-adjoint and bounded from below in $L^2(\Om;d^nx)$. Moreover,
\begin{equation}\lb{2.21}
\dom\big(|-\Delta_{\Theta,\Om}|^{1/2}\big)=H^1(\Om),
\end{equation}
and $-\Delta_{\Theta,\Om}$, has purely discrete spectrum bounded from below,
in particular,
\begin{equation}\lb{2.21a}
\sigma_{\rm ess}(-\Delta_{\Theta,\Om})=\emptyset.
\end{equation}
Finally, $-\Delta_{\Theta,\Om}$ is the operator uniquely associated with the sesquilinear 
form $a_{\Theta}$ in Lemma \ref{l.WW2}. 
\end{theorem}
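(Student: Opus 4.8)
The plan is to identify $-\Delta_{\Theta,\Om}$ with the operator associated with the form $a_{\Theta}$ via the abstract machinery of Section \ref{s2}, using the characterization \eqref{B.30}--\eqref{B.31} of that operator. By Lemma \ref{l.WW2}, the form $a_{\Theta}$ is symmetric, $H^1(\Om)$-bounded, bounded from below, and closed in $L^2(\Om;d^nx)$, so the discussion surrounding \eqref{B.19}--\eqref{B.31} applies with $\cV = H^1(\Om)$, $\cH = L^2(\Om;d^nx)$, and produces a self-adjoint operator $B$ bounded from below with $\dom(|B|^{1/2}) = H^1(\Om)$; this immediately yields \eqref{2.21}. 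Since the inclusion $H^1(\Om)\hookrightarrow L^2(\Om;d^nx)$ is compact (as $\Om$ is a bounded Lipschitz domain, via Rellich) and $a_{\Theta,\kappa}$ is $H^1(\Om)$-coercive by Lemma \ref{l.WW2}, Lemma \ref{lB.1} applies and gives purely discrete spectrum, hence \eqref{2.21a}. So the real content is the identification $B = -\Delta_{\Theta,\Om}$ in the sense of \eqref{2.20}.

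For that identification I would use the variational characterization \eqref{B.30}: $u\in\dom(B)$ if and only if $u\in H^1(\Om)$ and there exists $f_u\in L^2(\Om;d^nx)$ with $a_{\Theta}(w,u) = (w,f_u)_{L^2(\Om;d^nx)}$ for all $w\in H^1(\Om)$, in which case $Bu = f_u$. First I would show $\dom(B)\subseteq\dom(-\Delta_{\Theta,\Om})$. Given such $u$, testing against $w\in C_0^\infty(\Om)$ forces $-\Delta u = f_u$ in $\cD'(\Om)$, so $\Delta u\in L^2(\Om;d^nx)$ and in particular the weak Neumann trace $\wti\gamma_N u\in H^{-1/2}(\dOm)$ is defined via \eqref{2.8}, \eqref{2.9}. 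Then, for general $w = \Phi\in H^{3/2-s}(\Om)$ with $s$ chosen so that $3/2-s$ matches (indeed one wants the full $H^1$ here, so one takes $s=1/2$ and uses that $\ga_D$ maps $H^1(\Om)$ onto $H^{1/2}(\dOm)$), the defining Green-type identity \eqref{2.9} rearranges $a_\Theta(w,u) - (w,f_u)_{L^2} = \langle\ga_D w, (\wti\gamma_N + \Theta\ga_D)u\rangle_{1/2}$; since this vanishes for all $w\in H^1(\Om)$ and $\ga_D$ is onto $H^{1/2}(\dOm)$ (see \eqref{2.6a}), we conclude $(\wti\gamma_N + \Theta\ga_D)u = 0$ in $H^{-1/2}(\dOm)$, i.e.\ $u\in\dom(-\Delta_{\Theta,\Om})$ with $-\Delta_{\Theta,\Om}u = -\Delta u = f_u = Bu$.

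For the reverse inclusion $\dom(-\Delta_{\Theta,\Om})\subseteq\dom(B)$, I would run the same Green's identity in reverse: if $u\in H^1(\Om)$ with $\Delta u\in L^2(\Om;d^nx)$ and $(\wti\gamma_N + \Theta\ga_D)u = 0$, then for every $w\in H^1(\Om)$, writing $\phi := \ga_D w$ and using \eqref{2.9} with $\Phi := w$, one gets $a_\Theta(w,u) = (w, -\Delta u)_{L^2(\Om;d^nx)} + \langle\ga_D w, (\wti\gamma_N+\Theta\ga_D)u\rangle_{1/2} = (w, -\Delta u)_{L^2(\Om;d^nx)}$, so $f_u := -\Delta u\in L^2(\Om;d^nx)$ witnesses $u\in\dom(B)$ with $Bu = -\Delta u = -\Delta_{\Theta,\Om}u$. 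Combining the two inclusions gives $B = -\Delta_{\Theta,\Om}$, and since $B$ is self-adjoint, bounded from below, satisfies \eqref{2.21}, and has empty essential spectrum as noted above, all assertions of the theorem follow. The main obstacle is the bookkeeping in the Green-type identity: one must verify that \eqref{2.9} is applicable with the relevant Sobolev indices (in particular that the weak Neumann trace $\wti\gamma_N u$ is well-defined once $\Delta u\in L^2(\Om;d^nx)\subseteq H^{s_0}(\Om)$ for any $s_0\in(-1/2,0)$, and that $\ga_D\colon H^1(\Om)\to H^{1/2}(\dOm)$ is surjective so that no test functions are lost), and that the duality pairing $\langle\ga_D w,\Theta\ga_D u\rangle_{1/2}$ in $a_\Theta$ matches the $\langle\,\cdot\,,\wti\gamma_N u\rangle_{1/2}$ pairing consistently — this is exactly the point where self-adjointness of $\Theta$ in $\cB(H^{1/2}(\dOm),H^{-1/2}(\dOm))$ and the compatibility relation \eqref{B.3} are used.
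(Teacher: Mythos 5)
Your proposal is correct and follows essentially the same route as the paper's own proof: both define the operator abstractly via the variational characterization \eqref{B.30} applied to the closed form $a_\Theta$ from Lemma \ref{l.WW2} (yielding self-adjointness, semiboundedness, and \eqref{2.21}), identify its action as $-\Delta$ by testing against $C_0^\infty(\Om)$, establish the two domain inclusions through the weak Neumann trace Green-type identity \eqref{2.9} together with the surjectivity of $\ga_D\colon H^1(\Om)\to H^{1/2}(\dOm)$, and deduce \eqref{2.21a} from the compact embedding $H^1(\Om)\hookrightarrow L^2(\Om;d^nx)$ via Lemma \ref{lB.1}. No gaps; the Sobolev-index bookkeeping you flag (taking $s=1/2$, $s_0=0$ in \eqref{2.8}--\eqref{2.9}) is exactly what the paper uses.
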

\begin{proof}
Denote by $a_{-\Delta_{\Theta,\Om}}(\dott,\dott)$ the sesquilinear form
introduced in \eqref{2.22}. From Lemma \ref{l.WW2}, we know that
$a_{-\Delta_{\Theta,\Om}}$ is symmetric, $H^1(\Om)$-bounded,
bounded from below, as well as densely defined and closed in $\LOm\times\LOm$.
Thus, if as in \eqref{B.30}, we now introduce the operator
$-\Delta_{\Theta,\Om}$ in $L^2(\Om;d^n x)$ by
\begin{align}\lb{2.31}
& \dom(-\Delta_{\Theta,\Om})=\bigg\{v\in H^1(\Om) \,\bigg|\, 
\text{there exists a $w_v \in L^2(\Om;d^n x)$ such that}
\nonumber\\[4pt]
& \quad \;\; \int_{\Om}d^nx\,\ol{\nabla w} \, \nabla v
+ \big\langle\gamma_D w,\Theta\gamma_D v\big\rangle_{1/2}
=\int_{\Om}d^n x\,\ol{w}w_v\text{ for all $w\in H^1(\Om)$}\bigg\},
\nonumber\\[4pt]
& -\Delta_{\Theta,\Om}u=w_u,\quad u\in\dom(-\Delta_{\Theta,\Om}),
\end{align}
it follows from \eqref{B.19}--\eqref{B.40} (cf., in particular \eqref{B.25})
that $-\Delta_{\Theta,\Om}$ is self-adjoint and bounded from below in
$L^2(\Om;d^n x)$ and that \eqref{2.21} holds. Next we recall that
\begin{equation}\label{H-zer}
H_0^1(\Om)=\big\{u\in H^1(\Om)\,\big|\,\gamma_Du=0\mbox{ on }\partial\Omega\big\}.
\end{equation}
Taking $v\in C_0^\infty(\Omega) \hookrightarrow H^1_0(\Om)
\hookrightarrow H^1(\Om)$, one concludes
\begin{equation}\lb{2.32}
\int_{\Om}d^nx\,{\ol v}w_u=-\int_{\Om}d^nx\,{\ol v}\,\Delta u\,\
\text{ for all $v\in C_0^\infty(\Om)$, and hence }\,w_u
=-\Delta u\,\text{ in }\,\cD^\prime(\Om),
\end{equation}
with $\cD^\prime(\Om)=C_0^\infty(\Omega)^\prime$ the space of distributions
on $\Om$. Going further, suppose that $u\in\dom(-\Delta_{\Theta,\Om})$ and
$v\in H^1(\Om)$. We recall that $\gamma_D\colon H^1(\Om)\to H^{1/2}(\dOm)$
and compute
\begin{align}
\int_{\Om}d^nx\,\ol{\nabla v}\,\nabla u
& =-\int_{\Om}d^n x\,{\ol v}\,\Delta u
+\langle\gamma_D v,\wti\gamma_N u\rangle_{1/2}
\nonumber\\[4pt]
& =\int_{\Om}d^n x\,{\ol v} w_u
+\big\langle\gamma_D v,\big(\wti\gamma_N+\Theta\gamma_D\big)u\big\rangle_{1/2}
-\big\langle\gamma_D v,\Theta\gamma_D u\big\rangle_{1/2}
\nonumber\\[4pt]
& =\int_{\Om}d^n x\,\ol{\nabla v}\,\nabla u
+\big\langle\gamma_D v,\big(\wti\gamma_N+\Theta\gamma_D\big)u\big\rangle_{1/2},
\lb{2.33}
\end{align}
where we used the second line in \eqref{2.31}. Hence,
\begin{equation}
\big\langle \gamma_D v, \big(\wti\gamma_N+\Theta\gamma_D\big) u
\big\rangle_{1/2} =0.
\lb{2.34}
\end{equation}
Since $v\in H^1(\Om)$ is arbitrary, and the map
$\gamma_D\colon H^1(\Om) \to H^{1/2}(\dOm)$ is actually onto,
one concludes that
\begin{equation}
\big(\wti\gamma_N+\Theta\gamma_D\big)u=0\,\text{ in }\,H^{-1/2}(\dOm).
\lb{2.35}
\end{equation}
Thus,
\begin{equation}\lb{2.36}
\dom(-\Delta_{\Theta,\Om}) \subseteq \big\{v\in H^1(\Om) \,\big|\,
\Delta v\in L^2(\Om; d^nx),\,\big(\wti\gamma_N+\Theta\gamma_D\big)v=0
\text{ in } H^{-1/2}(\dOm)\big\}.
\end{equation}
Next, assume that $u\in \big\{v\in H^1(\Om) \,\big|\, \Delta v\in L^2(\Om;d^n x),\,
\big(\wti\gamma_N+\Theta\gamma_D\big)v=0\big\}$, $w\in H^1(\Om)$,
and let $w_u=-\Delta u\in L^2(\Om; d^n x)$. Then,
\begin{align}
\int_{\Om} d^n x \, {\ol w} w_u
&= - \int_{\Om} d^n x \, {\ol w} \, {\rm div} (\nabla u)   \no \\
&= \int_{\Om} d^n x \, \ol{\nabla w} \, \nabla u
- \langle \gamma_D w, \wti \gamma_N u \rangle_{1/2} \no \\
&= \int_{\Om} d^n x \, \ol{\nabla w} \, \nabla u
+ \big\langle \gamma_D w,\Theta \gamma_D u \big\rangle_{1/2}.
\lb{2.37}
\end{align}
Thus, applying \eqref{2.31}, one concludes that
$u \in \dom(-\Delta_{\Theta,\Om})$ and hence
\begin{equation}\lb{2.38}
\dom(-\Delta_{\Theta,\Om}) \supseteq \big\{v\in H^1(\Om)\,\big|\,
\Delta v \in L^2(\Om; d^n x), \,\big(\wti\gamma_N+\wti\Theta\gamma_D\big) v=0
  \text{ in } H^{-1/2}(\dOm)\big\}.
\end{equation}
Finally, the last claim in the statement of Theorem \ref{t2.3} follows
from the fact that $H^1(\Om)$ embeds compactly into $\LOm$ (cf., e.g.,
\cite[Theorem V.4.17]{EE89}); see Lemma \ref{lB.1}.
\end{proof}

In the special case $\Theta=0$, that is, in the case of the Neumann Laplacian,
we will also use the notation
\begin{equation}\lb{2.38b}
-\Delta_{N,\Om}:=-\Delta_{0,\Om}.
\end{equation}
The case of the Dirichlet Laplacian $-\Delta_{D,\Om}$ associated with
$\Om$ formally corresponds to $\Theta = \infty$ and so we isolate it
in the next result (cf.\ also \cite{GLMZ05}, \cite{GMZ07}):

\begin{theorem}  \lb{t2.5}
Assume Hypothesis \ref{h2.1}. Then the Dirichlet Laplacian,
$-\Delta_{D,\Om}$, defined by
\begin{align}
& -\Delta_{D,\Om} = -\Delta,   \no \\
& \; \dom(-\Delta_{D,\Om}) =
\big\{u\in H^1(\Om)\,\big|\, \Delta u \in L^2(\Om;d^n x), \,
\gamma_D u =0 \text{ in $H^{1/2}(\dOm)$}\big\}    \lb{2.39} \\
& \hspace*{2.13cm} = \big\{u\in H_0^1(\Om)\,\big|\, \Delta u \in L^2(\Om;d^n x)\big\},  \no 
\end{align}
is self-adjoint and strictly positive in $L^2(\Om;d^nx)$. Moreover,
\begin{equation}
\dom\big((-\Delta_{D,\Om})^{1/2}\big) = H^1_0(\Om).   \lb{2.40}
\end{equation}
\end{theorem}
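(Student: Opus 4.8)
The plan is to construct $-\Delta_{D,\Om}$ via the sesquilinear-form machinery recalled in Section \ref{s2}, taking $\cH := \LOm$ and, crucially, $\cV := H_0^1(\Om)$ (rather than $H^1(\Om)$, which is what enters the Robin case of Theorem \ref{t2.3}). Since $H_0^1(\Om)$ is a closed subspace of the Hilbert space $H^1(\Om)$ it is itself reflexive, and it is continuously and densely embedded in $\LOm$ because it contains $C_0^\infty(\Om)$, which is dense in $\LOm$; hence the setup \eqref{B.1}, \eqref{B.2} applies verbatim. On $\cV\times\cV$ I would introduce the Dirichlet form $a_D(u,v) := \int_\Om d^n x\, \ol{(\nabla u)(x)}\cdot(\nabla v)(x)$, which is patently symmetric and $\cV$-bounded by the Cauchy--Schwarz inequality. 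The one substantive point is $\cV$-coercivity: by Poincar\'e's inequality for $H_0^1$ of a bounded set (no boundary regularity is needed), there is $C_P>0$ with $\|u\|_{\LOm}\le C_P\,\|\nabla u\|_{\LOm^n}$ for all $u\in H_0^1(\Om)$, so that $a_D(u,u) = \|\nabla u\|_{\LOm^n}^2 \ge (1+C_P^2)^{-1}\|u\|_{H^1(\Om)}^2$, i.e., \eqref{B.10} holds with $C_0 := (1+C_P^2)^{-1}>0$.

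With coercivity established, properties \eqref{B.11}--\eqref{B.15} yield at once a self-adjoint operator $-\Delta_{D,\Om}$ in $\LOm$ associated with $a_D$ --- namely the part in $\cH$ of the bounded, self-adjoint, boundedly invertible map $\wti A\colon \cV\to\cV^*$, $v\mapsto a_D(\dott,v)$ --- obeying $-\Delta_{D,\Om}\ge C_0 I_\Om$. This gives strict positivity (and, as a bonus, $(-\Delta_{D,\Om})^{-1}\in\cB(\LOm)$), while \eqref{B.14} gives $\dom\big((-\Delta_{D,\Om})^{1/2}\big)=\cV=H_0^1(\Om)$, which is \eqref{2.40}. (Equivalently, one may note that $\nabla$ with domain $H_0^1(\Om)$ is a closed operator from $\LOm$ into $\LOm^n$ and invoke \eqref{B.42}, \eqref{B.43} to obtain $-\Delta_{D,\Om}=\nabla^*\nabla$, strict positivity once more coming from Poincar\'e.)

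It remains to identify $\dom(-\Delta_{D,\Om})$ with the set on the right-hand side of \eqref{2.39}. By \eqref{B.12} (equivalently, by \eqref{B.30}, \eqref{B.31}), $u\in\dom(-\Delta_{D,\Om})$ exactly when $u\in H_0^1(\Om)$ and there is $f_u\in\LOm$ with $a_D(w,u)=(w,f_u)_{\LOm}$ for all $w\in H_0^1(\Om)$, and then $-\Delta_{D,\Om}u=f_u$. Choosing $w\in C_0^\infty(\Om)$ and arguing exactly as in \eqref{2.32} shows $f_u=-\Delta u$ in $\cD^\prime(\Om)$, whence $\Delta u\in\LOm$ and $-\Delta_{D,\Om}$ acts as $-\Delta$; this yields the inclusion $\subseteq$. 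For the reverse inclusion, if $u\in H_0^1(\Om)$ with $\Delta u\in\LOm$, then the distributional identity $a_D(w,u)=-(w,\Delta u)_{\LOm}$ holds for all $w\in C_0^\infty(\Om)$; both sides are continuous in $w$ for the $H^1(\Om)$-norm (Cauchy--Schwarz together with $\|\dott\|_{\LOm}\le\|\dott\|_{H^1(\Om)}$), so by density of $C_0^\infty(\Om)$ in $H_0^1(\Om)$ it persists for all $w\in H_0^1(\Om)$, giving $u\in\dom(-\Delta_{D,\Om})$ with $f_u=-\Delta u$. Thus $\dom(-\Delta_{D,\Om})=\{u\in H_0^1(\Om)\,|\,\Delta u\in\LOm\}$, and the coincidence with the first description in \eqref{2.39} is immediate from \eqref{H-zer}, i.e., from $H_0^1(\Om)=\{u\in H^1(\Om)\,|\,\gamma_D u=0\}$ for Lipschitz $\Om$.

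Since the result is essentially classical, I do not anticipate a serious obstacle; the only estimate with real content is the coercivity bound, which reduces to the Poincar\'e inequality on a bounded domain, and the only mildly technical step is the routine density passage from $C_0^\infty(\Om)$ to $H_0^1(\Om)$ in the domain identification. By contrast with Theorem \ref{t2.3}, no boundary-trace apparatus enters the construction itself, since $H_0^1(\Om)$ is by definition the $H^1$-closure of $C_0^\infty(\Om)$; the trace operator $\gamma_D$ is used only in the final, cosmetic, rephrasing of the domain via \eqref{H-zer}.
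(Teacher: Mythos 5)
Your proposal is correct. The paper itself states Theorem \ref{t2.5} without proof (referring to \cite{GLMZ05}, \cite{GMZ07}), and your argument is precisely the natural one within the paper's own framework: the Section \ref{s2} form machinery applied with $\cV=H^1_0(\Om)$ in place of $H^1(\Om)$, coercivity supplied by the Poincar\'e inequality, the domain identification via \eqref{B.30} and the density of $C_0^\infty(\Om)$ in $H^1_0(\Om)$ (mirroring \eqref{2.31}--\eqref{2.32} in the proof of Theorem \ref{t2.3}), and the equivalence of the two descriptions in \eqref{2.39} from \eqref{H-zer}.
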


Since $\Om$ is open and bounded, it is well-known that $-\Delta_{D,\Om}$ has
purely discrete spectrum contained in $(0,\infty)$, in particular,
\begin{equation}
\sigma_{\rm ess}(-\Delta_{D,\Om})=\emptyset. 
\end{equation}
This follows from \eqref{2.40} since $H^1_0(\Om)$ embeds compactly into $\LOm$; the latter fact holds for arbitrary open, bounded sets $\Om\subset\bbR^n$ (see, e.g., 
\cite[Theorem V.4.18]{EE89}).

\section{Eigenvalue Inequalities}
\label{s5}

Assume Hypothesis \ref{h2.2} and denote by
\begin{equation}\label{mA.1}
\lambda_{\Theta,\Om,1}\leq\lambda_{\Theta,\Om,2}\leq\cdots\leq
\lambda_{\Theta,\Om,j}\leq\lambda_{\Theta,\Om,j+1}\leq\cdots
\end{equation}
the eigenvalues for the Robin Laplacian $-\Delta_{\Theta,\Om}$  in $L^2(\Om; d^n x)$, listed according to their multiplicity. Similarly, we let
\begin{equation}\label{mA.2}
0<\lambda_{D,\Om,1} < \lambda_{D,\Om,2}\leq\cdots\leq\lambda_{D,\Om,j}
\leq\lambda_{D,\Om,j+1}\leq\cdots
\end{equation}
be the eigenvalues for the Dirichlet Laplacian $-\Delta_{D,\Om}$  in $L^2(\Om; d^n x)$, again enumerated according to their multiplicity.

\begin{theorem}\lb{t5.1A}
Assume Hypothesis \ref{h2.2}, where the number $\delta>0$ is taken
to be sufficiently small relative to the Lipschitz character of $\Om$
and, in addition, suppose that
\begin{equation}\label{FGM-1}
\big\langle\gamma_D(e^{ix\cdot\eta}),\Theta\gamma_D(e^{ix\cdot\eta})\big\rangle_{1/2}
\leq 0 \, \text{ for all } \, \eta\in\bbR^n.
\end{equation}
Then
\begin{equation}\label{FGM-2}
\lambda_{\Theta,\Om,j+1}<\lambda_{D,\Om,j},\quad j\in\bbN.
\end{equation}
\end{theorem}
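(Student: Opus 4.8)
\emph{Proof plan.} The idea is to run Filonov's counting-function argument for the nonlocal Robin form $a_\Theta$ from Lemma \ref{l.WW2}, based on the min-max identity \eqref{B.56} of Lemma \ref{lB.1} (legitimate for $-\Delta_{\Theta,\Om}$ by Theorem \ref{t2.3}) together with the corresponding facts for $-\Delta_{D,\Om}$ from Theorem \ref{t2.5}. Fix $j\in\bbN$ and put $\mu:=\lambda_{D,\Om,j}$; since $\mu>0$ by \eqref{mA.2}, pick $\eta\in\bbR^n$ with $|\eta|^2=\mu$ and set $w_\eta(x):=e^{ix\cdot\eta}\in C^\infty(\ol\Om)\subset H^1(\Om)$. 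Let $\psi_1,\dots,\psi_j$ be an $L^2(\Om;d^nx)$-orthonormal family of Dirichlet eigenfunctions for $\lambda_{D,\Om,1},\dots,\lambda_{D,\Om,j}$ (so $\psi_i\in H^1_0(\Om)$ by \eqref{2.40}), and form the trial subspace $L_\eta:=\mathrm{span}\,\{\psi_1,\dots,\psi_j,w_\eta\}\subseteq H^1(\Om)$. First I would check $\dim L_\eta=j+1$: applying $\gamma_D$ to a linear relation among these vectors and using $\gamma_D\psi_i=0$ leaves $c\,\gamma_D w_\eta=0$ in $\LdOm$, and $\|\gamma_D w_\eta\|^2_{\LdOm}=|\dOm|>0$ forces the coefficient $c$ of $w_\eta$ to vanish, after which orthonormality forces the rest to vanish.

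Next, for $u=v+c\,w_\eta\in L_\eta$ with $v:=\sum_i c_i\psi_i$, I would evaluate $a_\Theta(u,u)-\mu\|u\|^2_{L^2(\Om;d^nx)}$. Since $v\in H^1_0(\Om)$ and $-\Delta w_\eta=\mu w_\eta$, Green's formula makes the $v$--$w_\eta$ cross terms occurring in $\int_\Om d^nx\,|(\nabla u)(x)|^2$ and in $\mu\int_\Om d^nx\,|u(x)|^2$ coincide, so they cancel in the difference; combining this with $\int_\Om d^nx\,|(\nabla w_\eta)(x)|^2=\mu\int_\Om d^nx\,|w_\eta(x)|^2$, with $\int_\Om d^nx\,\ol{(\nabla\psi_i)(x)}\cdot(\nabla\psi_l)(x)=\lambda_{D,\Om,l}\,\delta_{i,l}$, and with $\gamma_D u=c\,\gamma_D w_\eta$, the difference collapses to
\[
a_\Theta(u,u)-\mu\|u\|^2_{L^2(\Om;d^nx)}=\sum_i|c_i|^2\big(\lambda_{D,\Om,i}-\mu\big)+|c|^2\big\langle\gamma_D w_\eta,\Theta\,\gamma_D w_\eta\big\rangle_{1/2}\leq 0,
\]
using $\lambda_{D,\Om,i}\leq\mu$ for $i\leq j$ and hypothesis \eqref{FGM-1}. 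By \eqref{B.56} this yields $N_{-\Delta_{\Theta,\Om}}(\mu)\geq\dim L_\eta=j+1$, that is, $\lambda_{\Theta,\Om,j+1}\leq\lambda_{D,\Om,j}$.

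The delicate part is upgrading this to a strict inequality, and this is where $n\geq 2$ enters. Suppose $\lambda_{\Theta,\Om,j+1}=\mu$, set $m:=\#\{l\in\bbN\,|\,\lambda_{\Theta,\Om,l}<\mu\}\leq j$, and let $P$ be the spectral projection of $-\Delta_{\Theta,\Om}$ onto $[\mu,\infty)$. For every $\eta$ with $|\eta|^2=\mu$ one has $\dim(L_\eta\cap\ran P)\geq(j+1)-m\geq 1$, and any $0\neq u\in L_\eta\cap\ran P$ satisfies $a_\Theta(u,u)\leq\mu\|u\|^2_{L^2(\Om;d^nx)}$ (displayed bound) together with $a_\Theta(u,u)\geq\mu\|u\|^2_{L^2(\Om;d^nx)}$ (spectral theorem on $\ran P$), hence equality. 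Read through the spectral measure of $-\Delta_{\Theta,\Om}$, equality forces $u\in\ker(-\Delta_{\Theta,\Om}-\mu I_{\Om})$; and writing $u=v+c\,w_\eta$ as above, vanishing of the displayed identity forces $v\in\cE_D:=\ker(-\Delta_{D,\Om}-\mu I_{\Om})$ and forces either $c=0$ or $\langle\gamma_D w_\eta,\Theta\,\gamma_D w_\eta\rangle_{1/2}=0$. If for some admissible $\eta$ one can select such a $u$ with $c=0$, then $u=v\in\cE_D$ is nonzero with $\gamma_D u=0$ and, by the boundary condition in \eqref{2.20}, $\wti\gamma_N u=-\Theta\gamma_D u=0$; extending $u$ by zero across $\dOm$ yields $U\in H^1(\bbR^n)$ solving $-\Delta U=\mu U$ in $\bbR^n$ (the Cauchy data $\gamma_D u$, $\wti\gamma_N u$ both vanish), hence $U$ is real-analytic and vanishes on the nonempty open set $\bbR^n\setminus\ol\Om$, so $u=U\equiv 0$, a contradiction. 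Otherwise, for every $\eta$ with $|\eta|^2=\mu$ the restriction $w_\eta|_\Om$ differs from a Robin $\mu$-eigenfunction by an element of $\cE_D$, so all such $w_\eta|_\Om$ lie in the finite-dimensional space $\cE_D+\ker(-\Delta_{\Theta,\Om}-\mu I_{\Om})$; but since $n\geq 2$ the sphere $\{\eta\in\bbR^n\,|\,|\eta|^2=\mu\}$ is infinite and the plane waves $w_\eta|_\Om$ with distinct $\eta$ are linearly independent in $L^2(\Om;d^nx)$ (a nonzero finite combination is real-analytic, hence cannot vanish on the open set $\Om$), so their span is infinite-dimensional --- a contradiction. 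In either case $\lambda_{\Theta,\Om,j+1}\neq\mu$, which with the non-strict bound establishes \eqref{FGM-2}.

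I expect the main obstacle to be exactly this last step: formulating the ``equality in $-\Delta_{\Theta,\Om}\geq\mu I_{\Om}$ forces a $\mu$-eigenfunction'' argument correctly at the level of the form $a_\Theta$ on $H^1(\Om)$ (rather than for bounded operators), and arranging the unique-continuation-versus-infinite-dimensionality alternative so that it goes through for an arbitrary bounded Lipschitz domain $\Om$. The remaining ingredients --- the Green's-formula cancellation of cross terms and the min-max principle \eqref{B.56} --- are essentially routine given Sections \ref{s2}--\ref{s4}.
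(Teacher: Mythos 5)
Your proof is correct, but the way you obtain the strict inequality is genuinely different from the paper's (Filonov's) argument. The paper works with a single enlarged trial space: it takes a maximal Dirichlet test subspace $U_\lambda\subseteq H^1_0(\Om)$ realizing $N_D(\lambda)$, adjoins the whole Robin eigenspace $\ker(-\Delta_{\Theta,\Om}-\lambda\,I_{\Om})$ --- the sum being direct by the unique continuation fact \eqref{FGN-6} --- and then one plane wave $e^{ix\cdot\eta_0}$ chosen outside this finite-dimensional sum (possible because the sphere $|\eta|=\sqrt{\lambda}$ carries infinitely many linearly independent plane waves; this is where $n\geq 2$ enters). Verifying the form bound \eqref{FGN-10} on $W_\lambda$ then requires handling cross terms with Robin eigenfunctions via $\wti\ga_N v=-\Theta\ga_D v$, and the counting inequalities \eqref{FGN-11}--\eqref{FGN-12} deliver $\lambda_{\Theta,\Om,j+1}<\lambda_{D,\Om,j}$ in one stroke, with no separate rigidity analysis. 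You instead use the simpler Friedlander-type space $\mathrm{span}\,\{\psi_1,\dots,\psi_j,e^{ix\cdot\eta}\}$, which makes the quadratic-form computation lighter but only yields $\lambda_{\Theta,\Om,j+1}\leq\lambda_{D,\Om,j}$, and you then upgrade to strictness by a spectral-projection equality analysis, splitting into the unique-continuation alternative (your case $c=0$, which is exactly the paper's \eqref{FGN-6}) and the infinite-dimensionality alternative (all plane waves on the sphere lying in $\cE_D+\ker(-\Delta_{\Theta,\Om}-\mu\,I_{\Om})$, impossible since distinct exponentials restricted to $\Om$ are linearly independent --- the very fact the paper uses to choose $\eta_0$). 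So both proofs consume the same two ingredients, unique continuation for eigenfunctions with vanishing Cauchy data and the infinitude of plane waves on the sphere for $n\geq 2$, but yours redistributes the work: an easier trial-space estimate at the price of a more delicate rigidity step (which you correctly carry out at the level of the form $a_\Theta$ via the spectral theorem), whereas Filonov's device of adjoining the Robin kernel to the trial space buys the strict inequality directly from the eigenvalue counting functions.
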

\begin{proof}
One can follow Filonov \cite{Fi04} closely. The main reason we present
Filonov's elegant argument is to ensure that this continues to hold
in the case when a nonlocal Robin boundary condition is considered (in lieu
of the Neumann boundary condition).
Recalling the eigenvalue counting functions for the Dirichlet and Robin Laplacians,
one sets for each $\lambda\in\bbR$,  
\begin{equation}\label{FGM-3}
N_D(\lambda):=\#\,\{\sigma(-\Delta_{D,\Om})\cap (-\infty,\lambda]\},
\quad
N_\Theta(\lambda):=\#\,\{\sigma(-\Delta_{\Theta,\Om})\cap (-\infty,\lambda]\}.
\end{equation}
Then Lemmas \ref{lB.1} and \ref{l.WW2} ensure that 
for each $\lambda\in\bbR$ one has
\begin{align}\label{FGN-4}
& N_D(\lambda)=\max \bigg\{\dim (L)\in\bbN_0 \,\bigg|\,L\text{ a subspace of }
H^1_0(\Om) \text{ such that }  \no \\
& \hspace*{3.85cm} 
\int_{\Om}d^nx\,|(\nabla u)(x)|^2
\leq\lambda\|u\|^2_{L^2(\Om;d^nx)}\text{ for all }u\in L\bigg\},
\end{align}
and
\begin{align}\label{FGN-5}
& N_\Theta(\lambda)
=\max \bigg\{\dim (L)\in\bbN_0 \,\bigg|\,L\text{ a subspace of }H^1(\Om)
\text{ with the property that}   \no \\
& \hspace*{1cm} 
\int_{\Om}d^nx\,|(\nabla u)(x)|^2
+\langle\ga_D u,\Theta\ga_D u\rangle_{1/2}
\leq\lambda\|u\|^2_{L^2(\Om;d^nx)}\text{ for all }u\in L\bigg\}.  
\end{align}
Next, observe that for any $\lambda\in\bbC$,
\begin{equation}\label{FGN-6}
H^1_0(\Om)\cap \ker(-\Delta_{\Theta,\Om}-\lambda\,I_{\Om})=\{0\}.
\end{equation}
Indeed, if
$u\in H^1_0(\Om)\cap \ker(-\Delta_{\Theta,\Om}-\lambda\,I_{\Om})$, 
then $u\in H^1(\Om)$ satisfies $(-\Delta-\lambda)u=0$ in $\Om$
and $\ga_D u = \wti\ga_N u=0$. It follows that the extension by
zero of $u$ to the entire $\bbR^n$ belongs to $H^1(\bbR^n)$, is compactly
supported, and is annihilated by $-\Delta-\lambda$. Hence, this function
vanishes identically, by unique continuation (see, e.g., \cite[p.\ 239--244]{RS78}).

To continue, we fix $\lambda>0$ and pick a subspace $U_{\lambda}$ of 
$H^1_0(\Om)$ such that $\dim(U_{\lambda})=N_D(\lambda)$ and
\begin{equation}\label{FGN-7}
\int_{\Om}d^nx\,|(\nabla u)(x)|^2\leq\lambda\,\int_{\Om}d^nx\,|u(x)|^2, 
\quad u\in U_{\lambda}.
\end{equation}
Then the sum $U_{\lambda} \, \dot{+} \, \ker(-\Delta_{\Theta,\Om}-\lambda\,I_{\Om})$
is direct, by \eqref{FGN-6}. Since the functions
$\big\{e^{ix\cdot\eta} \,\big|\, \eta\in\bbR^n,\,|\eta|=\sqrt{\lambda}\big\}$ are
linearly independent, it follows that there exists a vector $\eta_0\in\bbR^n$
with $|\eta_0|=\sqrt{\lambda}$ and such that $e^{ix\cdot\eta_0}$ does not
belong to the finite-dimensional space
$U_{\lambda} \, \dot{+} \, \ker(-\Delta_{\Theta,\Om}-\lambda\,I_{\Om})$. Assuming that
this is the case, introduce
\begin{equation}\label{FGN-8}
W_{\lambda}:= U_{\lambda} \, \dot{+} \, \ker(-\Delta_{\Theta,\Om}-\lambda\,I_{\Om})
\, \dot{+} \, \big\{ce^{ix\cdot\eta_0} \,\big|\, c\in\bbC\big\},
\end{equation}
so that $W_{\lambda}$ is a finite-dimensional subspace of $H^1(\Om)$.
Let $w=u+v+ce^{ix\cdot\eta_0}$ be an arbitrary vector in $W_{\lambda}$,
where $u\in U_{\lambda}$, $v\in \ker(-\Delta_{\Theta,\Om}-\lambda\,I_{\Om})$,
and $c\in\bbC$. We then write
\begin{align}\label{FGN-9}
& \int_{\Om}d^nx|(\nabla w)(x)|^2+\langle\ga_D w,\Theta\ga_D w\rangle_{1/2}
\no \\
& \quad =\int_{\Om}d^nx|\nabla(u+v+ce^{ix\cdot\eta_0})|^2
+\langle\ga_D(v+ce^{ix\cdot\eta_0}),\Theta\ga_D (v+ce^{ix\cdot\eta_0})\rangle_{1/2}
\no \\
& \quad =\int_{\Om}d^nx \, \big(|\nabla u|^2+|\nabla v|^2+|c\eta_0|^2\big)  \no \\
& \qquad +2 \Re \, \bigg(\int_{\Om}d^nx\, 
\big[\ol{\nabla v}\cdot\nabla(u+ce^{ix\cdot\eta_0})
 +\ol{\nabla(ce^{ix\cdot\eta_0})}\cdot\nabla u\big]\bigg) \no \\
& \qquad 
+\langle\ga_D(v+ce^{ix\cdot\eta_0}),\Theta\ga_D (v+ce^{ix\cdot\eta_0})\rangle_{1/2}
\no \\
& \quad =:I_1+I_2+I_3.
\end{align}
An integration by parts shows that
\begin{align}\label{FGN-9X}
\int_{\Om}d^nx\,|\nabla v|^2 &= -\int_{\Om}d^nx\, {\ol v} \Delta v 
+\langle\ga_D v,\wti\gamma_N v\rangle_{1/2}
\no \\
&= \lambda\,\int_{\Om}d^nx\,|v|^2-\langle\ga_D v,\Theta\ga_D v\rangle_{1/2}
\end{align}
where the last equality holds thanks to $-\Delta v=\lambda\,v$ and
$\wti\ga_N v=-\Theta\ga_D v$.
We now make use of this, \eqref{FGN-7}, the fact that $|\eta_0|^2=\lambda$,
in order to estimate
\begin{equation}\label{FGN-9Y}
I_1\leq \lambda\,\int_{\Om}d^nx\, \big[|u|^2+|v|^2+|c|^2\big]
-\langle\ga_D v,\Theta\ga_D v\rangle_{1/2}.
\end{equation}
Similarly,
\begin{align}\label{FGN-9Z}
I_2 &= -2 \Re \, \bigg(\int_{\Om}d^nx\,[\ol{\Delta v}(u+ce^{ix\cdot\eta_0})
+\ol{\Delta(ce^{ix\cdot\eta_0})}u]\bigg) 
+2 \Re \, \big(\langle\ga_D (ce^{ix\cdot\eta_0}),\wti\ga_N v\rangle_{1/2}\big)
\no \\
&= 2\lambda \Re \, \bigg(\int_{\Om}d^nx\,[\ol{v}(u+ce^{ix\cdot\eta_0})
+\ol{ce^{ix\cdot\eta_0}}u]\bigg)
-2 \Re \, \big(\langle\ga_D (ce^{ix\cdot\eta_0}),\Theta\ga_D v\rangle_{1/2}\big).
\end{align}
Thus, altogether,
\begin{align}\label{FGN-9W}
\begin{split}
& \int_{\Om}d^nx|(\nabla w)(x)|^2+\langle\ga_D w,\Theta\ga_D w\rangle_{1/2}  \\
& \quad \leq \lambda\,\int_{\Om}d^nx|w(x)|^2
+|c|^2\langle\ga_D(e^{ix\cdot\eta_0}),\Theta\ga_D (e^{ix\cdot\eta_0})\rangle_{1/2}.
\end{split} 
\end{align}
Upon recalling \eqref{FGM-1}, this yields
\begin{equation}\label{FGN-10}
\int_{\Om}d^nx|(\nabla w)(x)|^2+\langle\ga_D w,\Theta\ga_D w\rangle_{1/2}
\leq \lambda\,\int_{\Om}d^nx|w(x)|^2,\quad w\in W_{\lambda}.
\end{equation}
Consequently,
\begin{align}\label{FGN-11}
N_{\Theta}(\lambda)\geq \dim(W_{\lambda}) &= \dim(U_{\lambda}) 
+ \dim(\ker (-\Delta_{\Theta,\Om}-\lambda\,I_{\Om}))+1
\no \\
&= N_D(\lambda)
+ \dim(\ker (-\Delta_{\Theta,\Om}-\lambda\,I_{\Om}))+1.
\end{align}
Specializing this to the case when $\lambda=\lambda_{D,\Om,j}$ then yields 
\begin{align}\label{FGN-12}
\#\,\{\sigma(-\Delta_{\Theta,\Om})\cap(-\infty,\lambda_{D,\Om,j})\}
& =N_{\Theta}(\lambda_{D,\Om,j}) 
- \dim(\ker (-\Delta_{\Theta,\Om}-\lambda_{D,\Om,j}\,I_{\Om})) 
\no \\
& \geq N_D(\lambda_{D,\Om,j})+1\geq j+1.
\end{align}
Now, the fact that
$\#\,\{\sigma(-\Delta_{\Theta,\Om})\cap(-\infty,\lambda_{D,\Om,j})\} \geq j+1$ is
reinterpreted as \eqref{FGM-2}.
\end{proof}

We briefly pause to describe a class of examples satisfying the hypotheses 
of Theorem \ref{t5.1A}:

\begin{example} \lb{e5.1a}
Consider the special case $s=1/2$ in the compact embedding result \eqref{EQ1}. Then a  class of (generally, nonlocal) Robin boundary conditions satisfying the hypotheses 
of Theorem \ref{t5.1A} is generated by any operator 
$T \in \cB(L^2(\dOm; d^{n-1}\omega))$ satisfying $T \leq 0$ since the composition of 
$T$ with the compact embedding operator 
\begin{equation}
J_{H^{1/2}(\partial\Omega)} \colon 
H^{1/2}(\partial\Omega)\to L^2(\partial\Omega;d^{n-1} \omega)
\end{equation} 
yields a boundary operator $\Theta = T J_{H^{1/2}(\partial\Omega)} \in 
\cB_{\infty}\big(H^{1/2}(\dOm),L^{2}(\dOm)\big)$ and hence 
$\Theta \in \cB_{\infty}\big(H^{1/2}(\dOm),H^{-1/2}(\dOm)\big)$ is of the type 
$\Theta_2$ in Hypothesis \ref{h2.2}. 
\end{example} 

We note that condition \eqref{FGM-1} in Theorem \ref{t5.1A} can be further refined and we will return to this issue in our final Remark \ref{r5.4}.

The case treated in \cite{Fi04} is that of a local Robin boundary condition.
That is, it was assumed that $\Theta$ is the operator of multiplication $M_{\theta}$
by a function $\theta$ defined on $\dOm$ (which satisfies appropriate
conditions). To better understand the way in which this scenario relates
to the more general case treated here, we state and prove the following result: 

\begin{lemma}\lb{La.1}
Assume Hypothesis \ref{h2.1} and suppose that $\Theta=M_\theta$, the operator
of multiplication with a measurable function $\theta:\dOm\to\bbR$.
Suppose that $\theta\in L^p(\dOm;d^{n-1}\omega)$, where
\begin{equation}\label{Fpp}
p=n-1 \,\mbox{ if } \, n>2,  \mbox{ and } \, p\in(1,\infty] \,\mbox{ if } \, n=2. 
\end{equation}
Then
\begin{equation}\label{FGN-13}
\Theta\in\cB_{\infty}\big(H^{1/2}(\dOm),H^{-1/2}(\dOm)\big)
\end{equation}
is a self-adjoint operator which satisfies
\begin{equation}\label{FGN-14}
\|\Theta\|_{\cB(H^{1/2}(\dOm),H^{-1/2}(\dOm))}
\leq C\|\theta\|_{L^p(\dOm;d^{n-1}\omega)},
\end{equation}
where $C=C(\Om,n,p)>0$ is a finite constant.
\end{lemma}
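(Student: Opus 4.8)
The plan is to realize $\Theta = M_\theta$ as a composition of three bounded maps — a trace-type embedding into a Lebesgue space, multiplication by $\theta$ in that Lebesgue space, and a dual embedding back — and then invoke a compact Sobolev embedding in the middle slot. Concretely, I would proceed as follows. First, choose an exponent $q$ so that the embedding $H^{1/2}(\dOm)\hookrightarrow L^q(\dOm;d^{n-1}\omega)$ holds and is \emph{compact}. On an $(n-1)$-dimensional Lipschitz boundary, the Sobolev embedding theorem (transported to $\dOm$ via localization and bi-Lipschitz pull-back, exactly as justified after \eqref{A.6}) gives $H^{1/2}(\dOm)\hookrightarrow L^q(\dOm;d^{n-1}\omega)$ continuously for $q = 2(n-1)/(n-2)$ when $n>2$ (and for every $q<\infty$ when $n=2$); by a standard interpolation-with-compactness argument (using the compact embedding \eqref{EQ1} into $L^2$), the embedding into $L^{q'}(\dOm;d^{n-1}\omega)$ is \emph{compact} for every $q'$ strictly below the critical exponent, in particular for $q' = 2(n-1)/(n-2-\varepsilon)$ and, dually, for any exponent slightly below the sharp one. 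The exponent $p=n-1$ (resp.\ $p>1$) in \eqref{Fpp} is exactly the Hölder-dual bookkeeping that makes the middle step work.

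Second, I would factor the quadratic form. For $f,g\in H^{1/2}(\dOm)$ write
\begin{equation}\label{eqn-Lapp1}
\langle f, \Theta g\rangle_{1/2} = \int_{\dOm} d^{n-1}\omega\,\ol{f(\xi)}\,\theta(\xi)\,g(\xi),
\end{equation}
and estimate by a three-fold Hölder inequality with exponents $(r, p, r)$ where $1/r + 1/p + 1/r = 1$, i.e.\ $r = 2p/(p-1)$. This gives
\begin{equation}\label{eqn-Lapp2}
\big|\langle f, \Theta g\rangle_{1/2}\big| \le \|\theta\|_{L^p(\dOm;d^{n-1}\omega)}\,\|f\|_{L^r(\dOm;d^{n-1}\omega)}\,\|g\|_{L^r(\dOm;d^{n-1}\omega)}.
\end{equation}
The choice $p=n-1$ forces $r = 2(n-1)/(n-2)$, which is precisely the critical Sobolev exponent for $H^{1/2}$ on an $(n-1)$-manifold; for $n=2$ any $p>1$ yields a finite $r$, which is admissible since all $L^r(\dOm;d^{n-1}\omega)$ embeddings hold on a bounded one-dimensional boundary. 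Continuity of $H^{1/2}(\dOm)\hookrightarrow L^r(\dOm;d^{n-1}\omega)$ then yields \eqref{FGN-14} with $C$ the square of the embedding norm (which depends only on $\Om,n,p$). Self-adjointness of $\Theta$ as an operator in $\cB(H^{1/2}(\dOm),H^{-1/2}(\dOm))$ is immediate from the identity \eqref{eqn-Lapp1}: swapping $f$ and $g$ and conjugating recovers $\ol{\langle g,\Theta f\rangle_{1/2}}$ because $\theta$ is real-valued, which is exactly the definition \eqref{B.5} of self-adjointness between a space and its conjugate dual.

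Third, for the compactness claim \eqref{FGN-13} I would argue by density and approximation: bounded functions are dense in $L^p(\dOm;d^{n-1}\omega)$, and if $\theta$ is bounded then $\Theta = M_\theta$ factors as $H^{1/2}(\dOm)\hookrightarrow L^2(\dOm;d^{n-1}\omega) \xrightarrow{M_\theta} L^2(\dOm;d^{n-1}\omega) \hookrightarrow H^{-1/2}(\dOm)$, where the first arrow is compact by \eqref{EQ1}, so $\Theta\in\cB_\infty$. For general $\theta\in L^p$, pick bounded $\theta_k\to\theta$ in $L^p$; then $M_{\theta_k}\to M_\theta$ in $\cB(H^{1/2}(\dOm),H^{-1/2}(\dOm))$ by the already-proved bound \eqref{FGN-14} applied to $\theta-\theta_k$, and $\cB_\infty$ is norm-closed, giving \eqref{FGN-13}. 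The main obstacle I anticipate is not the factorization but the bookkeeping of exponents and the verification that the relevant $H^{1/2}(\dOm)\hookrightarrow L^r(\dOm;d^{n-1}\omega)$ embedding is valid (continuous for the critical $r$, compact for subcritical $r$) on a merely Lipschitz boundary — this requires care with the fractional Sobolev embedding on $\bbR^{n-1}$ together with the localization-and-pullback principle recorded in Section \ref{s3}, and is where the constraint $p=n-1$ (rather than any $p>n-1$) is genuinely sharp.
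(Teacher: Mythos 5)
Your proposal is correct and takes essentially the same route as the paper: the bound \eqref{FGN-14} is obtained from H\"older's inequality combined with the critical embedding $H^{1/2}(\dOm)\hookrightarrow L^{2(n-1)/(n-2)}(\dOm;d^{n-1}\omega)$ (your trilinear form estimate is the same computation as the paper's factorization through $L^{q_0}(\dOm;d^{n-1}\omega)$ and $L^{q_1}(\dOm;d^{n-1}\omega)\hookrightarrow H^{-1/2}(\dOm)$), and compactness is deduced by approximating $\theta$ in $L^p(\dOm;d^{n-1}\omega)$ by better functions and using that $\cB_\infty\big(H^{1/2}(\dOm),H^{-1/2}(\dOm)\big)$ is closed in operator norm. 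The only minor difference is the approximation step: you truncate to bounded $\theta_k$ and invoke the compact embedding \eqref{EQ1} with $s=1/2$, whereas the paper approximates by $\theta_j\in L^{p_0}(\dOm;d^{n-1}\omega)$ with $p_0>p$ and uses compactness of embeddings between Sobolev spaces on $\dOm$; both work equally well, and in the case $n=2$, $p=\infty$ no approximation is needed at all since $\theta$ is then itself bounded.
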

\begin{proof}
Standard embedding results for Sobolev spaces (which continue to hold
in the case when the ambient space is the boundary of a bounded Lipschitz domain)
yield that
\begin{equation}\label{FGN-15}
H^{1/2}(\dOm)\hookrightarrow L^{q_0}(\dOm;d^{n-1}\omega), 
\, \mbox{ where }\,  q_0:=\begin{cases}
\frac{2(n-1)}{n-2} \, \mbox{ if }n>2, \\
\mbox{any number in $(1,\infty)$ if $n=2$}.
\end{cases}
\end{equation}
Since the above embedding is continuous with dense range, via duality we
also obtain that
\begin{equation}\label{FGN-16}
L^{q_1}(\dOm;d^{n-1}\omega)\hookrightarrow
H^{-1/2}(\dOm), \, \mbox{ where }\, 
q_1:=\begin{cases}
\frac{2(n-1)}{n}\mbox{ if }n>2, \\
\mbox{any number in $(1,\infty)$ if $n=2$}.
\end{cases} 
\end{equation}
Together, \eqref{FGN-15} and \eqref{FGN-16} yield that
\begin{equation}\label{FGN-17}
\cB\big(L^{q_0}(\dOm;d^{n-1}\omega),L^{q_1}(\dOm;d^{n-1}\omega)\big)
\hookrightarrow\cB\big(H^{1/2}(\dOm),H^{-1/2}(\dOm)\big),
\end{equation}
continuously. With $p$ as in the statement of the lemma, H\"older's
inequality yields  that
\begin{equation}\label{FGN-18}
M_\theta\in\cB\big(L^{q_0}(\dOm;d^{n-1}\omega),
L^{q_1}(\dOm;d^{n-1}\omega)\big)
\end{equation}
and
\begin{equation}\label{FGN-19}
\|M_\theta\|_{\cB(L^{q_0}(\dOm;d^{n-1}\omega),L^{q_1}(\dOm;d^{n-1}\omega))}
\leq C\|\theta\|_{L^p(\dOm;d^{n-1}\omega)},
\end{equation}
for some finite constant $C=C(\dOm,p,q_0,q_1)>0$, granted that
\begin{equation}\label{FGN-20}
\frac{1}{p}+\frac{1}{q_0}\leq\frac{1}{q_1}.
\end{equation}
Inequality \eqref{FGN-20} then holds with equality when $n>2$ and, given 
$p\in(1,\infty)$,
$q_0$, $q_1$ can always be chosen as in \eqref{FGN-15} and \eqref{FGN-16} when
$n=2$ so that \eqref{FGN-20} continues to hold in this case as well.
In summary, the above reasoning shows that
$\Theta=M_{\theta}\in\cB\big(H^{1/2}(\dOm),H^{-1/2}(\dOm)\big)$ and the 
estimate \eqref{FGN-14} holds. Let us also point out that $\Theta$ is a
self-adjoint operator, since $\theta$ is real-valued.

It remains to establish \eqref{FGN-13}, that is, to 
show that $\Theta$ is also a compact operator. To this end,
fix $p_0>p$ and let $\theta_j\in L^{p_0}(\dOm;d^{n-1}\omega)$, $j\in\bbN$,
be a sequence of real-valued functions with the property that $\theta_j\to\theta$
in $L^p(\dOm;d^{n-1}\omega)$ as $j\to\infty$. Set $\Theta_j:=M_{\theta_j}$,
$j\in\bbN$. From what we proved above, it follows that
\begin{equation}\label{FGN-21}
\Theta_j \to \Theta \, \mbox{ in } \,
\cB\big(H^{1/2}(\dOm),H^{-1/2}(\dOm)\big) \,\mbox{ as }\, j\to\infty,
\end{equation}
and there exists $r\in(1/2,1)$ with the property that
\begin{equation}\label{FGN-22}
\Theta_j\in\cB\big(H^{r}(\dOm),H^{-1/2}(\dOm)\big),\quad j\in\bbN.
\end{equation}
Since the embedding $H^{r}(\dOm)\hookrightarrow H^{1/2}(\dOm)$
is compact, one concludes that
\begin{equation}\label{FGN-23}
\Theta_j\in\cB_{\infty}\big(H^{1/2}(\dOm),H^{-1/2}(\dOm)\big), 
\quad j\in\bbN.
\end{equation}
Thus, \eqref{FGN-13} follows from \eqref{FGN-23} and \eqref{FGN-21}.
\end{proof}

We end by including a special case of Theorem \ref{t5.1A}
which is of independent interest. In particular, this links our
conditions on $\Theta$ with Filonov's condition
\begin{equation}\label{FGN-24}
\int_{\dOm}d^{n-1}\omega(\xi) \,\theta(\xi)\leq 0
\end{equation}
in the case where $\Theta=M_{\theta}$.

\begin{corollary}\lb{c5.1F}
Assume Hypothesis \ref{h2.2}, where the number $\delta>0$ is taken
to be sufficiently small relative to the Lipschitz character of $\Om$
and, in addition, suppose that
\begin{equation}\label{FGM-1D}
\Theta\leq 0
\end{equation}
in the sense that $\langle f,\Theta f\rangle_{1/2} \leq 0$ for every
$f\in H^{1/2}(\partial\Omega)$. Then \eqref{FGM-2} holds.

In particular, assuming Hypothesis \ref{h2.1} and $\Theta=M_\theta$, 
with $\theta\in L^p(\dOm;d^{n-1}\omega)$, where $p$ is as in \eqref{Fpp},
is a function satisfying \eqref{FGN-24}, then \eqref{FGM-2} holds.
\end{corollary}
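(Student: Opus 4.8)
The plan is to deduce Corollary \ref{c5.1F} directly from Theorem \ref{t5.1A} by verifying that the hypotheses of that theorem are met. Since Theorem \ref{t5.1A} already provides the conclusion \eqref{FGM-2} under Hypothesis \ref{h2.2} together with the pointwise condition \eqref{FGM-1}, the only thing to check is that the abstract nonpositivity assumption \eqref{FGM-1D}, namely $\langle f,\Theta f\rangle_{1/2}\le 0$ for all $f\in H^{1/2}(\dOm)$, implies \eqref{FGM-1}. But this is immediate: for each fixed $\eta\in\bbR^n$ the function $e^{ix\cdot\eta}$ lies in $H^1(\Om)$ (it is smooth and $\Om$ is bounded), hence $\gamma_D(e^{ix\cdot\eta})\in H^{1/2}(\dOm)$ by the mapping property \eqref{2.6} of the Dirichlet trace. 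Applying \eqref{FGM-1D} with $f=\gamma_D(e^{ix\cdot\eta})$ gives exactly $\big\langle\gamma_D(e^{ix\cdot\eta}),\Theta\gamma_D(e^{ix\cdot\eta})\big\rangle_{1/2}\le 0$, which is \eqref{FGM-1}. Thus the first assertion of the corollary follows at once from Theorem \ref{t5.1A}.

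For the second assertion, I would combine the first part with Lemma \ref{La.1}. Under Hypothesis \ref{h2.1} with $\Theta=M_\theta$ and $\theta\in L^p(\dOm;d^{n-1}\omega)$ for $p$ as in \eqref{Fpp}, Lemma \ref{La.1} guarantees that $\Theta\in\cB_\infty\big(H^{1/2}(\dOm),H^{-1/2}(\dOm)\big)$ is self-adjoint. In particular, taking $\Theta_1=0$ (so $c_{\Theta_0}=0$, $a_{\Theta_0}\equiv 0$), $\Theta_2=\Theta$, and $\Theta_3=0$ in the decomposition \eqref{Filo-1} shows that $\Theta$ satisfies Hypothesis \ref{h2.2} for any $\delta>0$, in particular for $\delta$ small relative to the Lipschitz character of $\Om$. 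It therefore only remains to verify that the scalar condition \eqref{FGN-24}, $\int_{\dOm}d^{n-1}\omega(\xi)\,\theta(\xi)\le 0$, implies $\Theta\le 0$ in the sense of \eqref{FGM-1D}.

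This last implication is the one genuine computation. Here I would use that $\gamma_D(e^{ix\cdot\eta})=e^{i\,(\cdot)\cdot\eta}\big|_{\dOm}$ has modulus identically one on $\dOm$, so that for the multiplication operator $\Theta=M_\theta$ one has, by \eqref{2.4},
\begin{equation}
\big\langle\gamma_D(e^{ix\cdot\eta}),\Theta\gamma_D(e^{ix\cdot\eta})\big\rangle_{1/2}
=\int_{\dOm}d^{n-1}\omega(\xi)\,\ol{e^{i\xi\cdot\eta}}\,\theta(\xi)\,e^{i\xi\cdot\eta}
=\int_{\dOm}d^{n-1}\omega(\xi)\,\theta(\xi),
\end{equation}
which is $\le 0$ by \eqref{FGN-24}. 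This already yields \eqref{FGM-1} directly, so in fact one need not even invoke the full strength of \eqref{FGM-1D} for the exponential trial functions; Theorem \ref{t5.1A} applies verbatim. (If one prefers to route the argument through the first part of the corollary, one would instead note that \eqref{FGN-24} does \emph{not} in general force $\langle f,M_\theta f\rangle_{1/2}=\int_{\dOm}|f|^2\theta\le 0$ for all $f$, so the cleaner path is to verify \eqref{FGM-1} directly via the modulus-one property of the exponentials.) I expect no real obstacle here; the only point demanding care is checking that $e^{ix\cdot\eta}\in H^1(\Om)$ so that its Dirichlet trace is a legitimate element of $H^{1/2}(\dOm)$, and that the multiplication-operator pairing in \eqref{2.4} is applicable — both of which are routine given the boundedness of $\Om$ and Lemma \ref{La.1}. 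Hence the corollary follows.
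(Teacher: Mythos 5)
Your proof is correct and follows essentially the same route as the paper: the first part is exactly the paper's argument (the global nonpositivity \eqref{FGM-1D}, tested on $f=\gamma_D(e^{ix\cdot\eta})\in H^{1/2}(\dOm)$, yields \eqref{FGM-1}, so Theorem \ref{t5.1A} applies), and the second part likewise rests on Lemma \ref{La.1} together with the nonpositivity of the exponential pairings. The only divergence is organizational, and it is in your favor: the paper deduces the second part ``from the first part of Corollary \ref{c5.1F}, since \eqref{FGM-1D} reduces precisely to \eqref{FGN-24} for $\Theta=M_\theta$,'' a phrasing that, read literally, would require $\langle f,M_\theta f\rangle_{1/2}\leq 0$ for \emph{all} $f\in H^{1/2}(\dOm)$ --- which, as you correctly point out, does not follow from \eqref{FGN-24} alone. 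What is actually meant (and what you carry out) is that the hypothesis \eqref{FGM-1} of Theorem \ref{t5.1A} reduces to \eqref{FGN-24} because $|e^{ix\cdot\eta}|\equiv 1$ on $\dOm$, so one applies Theorem \ref{t5.1A} directly rather than the first assertion of the corollary. Your verification of Hypothesis \ref{h2.2} for $\Theta=M_\theta$ via the decomposition $\Theta_1=\Theta_3=0$, $\Theta_2=M_\theta\in\cB_\infty\big(H^{1/2}(\dOm),H^{-1/2}(\dOm)\big)$ supplied by Lemma \ref{La.1} matches the paper's own usage (cf.\ Example \ref{e5.1a}), so the argument is complete.
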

\begin{proof}
The first part is directly implied by Theorem \ref{t5.1A}.
The second part is a consequence of Lemma \ref{La.1} and the conclusion
in the first part of Corollary \ref{c5.1F}, since \eqref{FGM-1D} reduces precisely to
\eqref{FGN-24} for $\Theta=M_\theta$.
\end{proof}

\begin{remark} \lb{r5.4} 
After submitting our manuscript to the preprint archives we received a preprint version 
of Safarov's paper \cite{Sa08} in which an abstract approach to eigenvalue counting functions and Dirichlet-to-Neumann maps was developed. His methods permit a considerable improvement of condition \eqref{FGM-1} as described in the following:  
First, we note that in order to obtain the particular inequality
\begin{equation}
\lambda_{\Theta,\Om,j+1}<\lambda_{D,\Om,j} \, \text{ for some fixed } \, j\in\bbN,  
\lb{5.33}
\end{equation}  
the proof of Theorem \ref{t5.1A} uses condition \eqref{FGM-1} 
for only one value $\eta_j \in \bbR^n$ with $|\eta_j|^2 = \lambda_{D,\Om,j}$. Unfortunately, we have no manner to determine which $\eta_j$ to choose on the sphere $|\eta| = \lambda_{D,\Om,j}^{1/2}$ such that $e^{ix\cdot\eta_j}$ does not
belong to the finite-dimensional space $U_{\lambda_{D,\Om,j}} \, \dot{+} \, 
\ker(-\Delta_{\Theta,\Om} - \lambda_{D,\Om,j} \,I_{\Om})$.

On the other hand, applying Remark\ 1.11\,(3) of Safarov \cite{Sa08} (and using that 
$\sigma_{\rm ess}(-\Delta_{\Theta,\Om})=\emptyset$), to prove that the slightly weaker inequality 
\begin{equation}
\lambda_{\Theta,\Om,j+1} \leq \lambda_{D,\Om,j} \, \text{ for some fixed } \, j\in\bbN,  
\lb{5.34}
\end{equation}  
holds, it suffices to find just one element $u_j\in H^1(\Om)\backslash H_0^1(\Om)$ satisfying
\begin{equation}
\Delta u_j \in L^2(\Om; d^n x), \quad -\Delta u_j =  \lambda_{D,\Om,j} u_j,   \lb{5.35}
\end{equation}
and
\begin{equation}
a_{\Theta} (u,v) -  \lambda_{D,\Om,j} \|u_j\|_{L^2(\Om; d^n x)}^2 \leq 0.    \lb{5.36}
\end{equation}
Since one can choose $u_j (x) = e^{i x \cdot \eta_j}$ for any $\eta_j \in\bbR^n$ with 
$|\eta_j| = \lambda_{D,\Om,j}^{1/2}$, as long as \eqref{FGM-1} holds for 
$\eta = \eta_j$, this proves that \eqref{5.34} holds whenever
\begin{equation}
\big\langle\gamma_D(e^{ix\cdot\eta_j}),
\Theta\gamma_D(e^{ix\cdot\eta_j})\big\rangle_{1/2} \leq 0   \lb{5.37}
\end{equation}
for a single vector $\eta_j\in\bbR^n$ with $|\eta_j| = \lambda_{D,\Om,j}^{1/2}$.

Going further, and applying Remark\ 1.11\,(4) of Safarov \cite{Sa08} (see also the proof of Corollary\ 1.13 in \cite{Sa08}), one obtains 
strict inequality in \eqref{5.34} if there exist two elements 
$u_{j,1}, u_{j,2} \in H^1(\Om)\backslash H_0^1(\Om)$ satisfying \eqref{5.35} 
and \eqref{5.36} and $\text{\rm lin.span} \, \{u_{j,1}, u_{j,2}\}$ does not contain an element satisfying the boundary condition in $-\Delta_{\Theta,\Om}$. But the latter follows from \eqref{FGN-6}. The two elements $u_{j,1}, u_{j,2}$ can again be chosen as
$u_{j,k} (x) = e^{i x \cdot \eta_{j,k}}$ for any $\eta_{j,k} \in\bbR^n$ with 
$|\eta_{j,k}| = \lambda_{D,\Om,j}^{1/2}$, $k=1,2$, as long as \eqref{FGM-1} holds for 
$\eta = \eta_{j,1} \text{ and } \eta_{j,2}$. Summing up,  
\begin{equation}
\lambda_{\Theta,\Om,j+1} < \lambda_{D,\Om,j} \, \text{ for some fixed } \, j\in\bbN,  
\lb{5.38}
\end{equation}  
holds whenever
\begin{equation}
\big\langle\gamma_D(e^{ix\cdot\eta_{j,k}}),
\Theta\gamma_D(e^{ix\cdot\eta_{j,k}})\big\rangle_{1/2} \leq 0   \lb{5.39}
\end{equation}
for two vectors $\eta_{j,k}\in\bbR^n$ with $|\eta_{j,k}| = \lambda_{D,\Om,j}^{1/2}$, 
$k=1,2$. 

While \eqref{5.37} as well as \eqref{5.39} assume the {\it a priori} knowledge of 
$\lambda_{D,\Om,j}$, one can finesse this dependence as follows: For instance, 
\eqref{5.34} holds for all $j\in\bbN$ whenever the set of $\eta$ satisfying inequality 
\eqref{5.37} intersects every sphere in $\bbR^n$ centered at the origin. Similarly, 
if for some $\lambda_0 > 0$, 
\begin{equation} 
\big\langle\gamma_D(e^{ix\cdot\eta_0}),\Theta 
\gamma_D(e^{ix\cdot\eta_0})\big\rangle_{1/2}
< 0 \, \text{ for some } \, \eta_0 \in\bbR^n \, \text{ with } \, |\eta_0| = \lambda_0,  \lb{5.40}
\end{equation}
then by continuity of \eqref{5.40} with respect to $\eta_0$ (using the boundedness property $\Theta\in\cB\big(H^{1/2}(\dOm),H^{-1/2}(\dOm)\big)$), one infers that \eqref{5.38} holds for all eigenvalues sufficiently close to $\lambda_0$, etc.  
\end{remark}
\medskip

\noindent {\bf Acknowledgments.}
We are indebted to Mark Ashbaugh for helpful discussions and very valuable hints with regard to the literature and especially to Yuri Safarov for pointing out the validity of 
Remark \ref{r5.4} to us.


\end{document}